\newtheoremstyle{plainNoItalics}{}{}{\normalfont}{}{\bfseries}{.}{ }{}
\theoremstyle{plain}
\newtheorem{thm}{Theorem}[section]
\newtheorem{defn}[thm]{Definition}
\newtheorem{prop}[thm]{Proposition}
\newtheorem{exa}[thm]{Example}
\newcommand{\beq}{\begin{equation}}
\newcommand{\eeq}{\end{equation}}
\newcommand{\beqa}{\begin{eqnarray}}
\newcommand{\eeqa}{\end{eqnarray}}
\newcommand{\bit}{\begin{itemize}}
\newcommand{\eit}{\end{itemize}}
\newcommand{\bedef}{\begin{defn}}
\newcommand{\edefn}{\end{defn}}
\newcommand{\bpro}{\begin{prop}}
\newcommand{\epro}{\end{prop}}
\newcommand{\mO}{{\mathcal O}}
\newcommand{\eps}{\varepsilon}
\newcommand{\bx}{{\bf x}}
\newcommand{\bu}{{\bf u}}
\newcommand{\bn}{{\bf n}}
\newcommand{\bV}{{\bf V}}
\newcommand{\sr}{{\text{Sr}}}
\newcommand{\fr}{{\text{Fr}}}
\title[High order AP schemes for SWEs in all-Fraude number]{High order well-balanced asymptotic preserving finite difference WENO schemes for the shallow water equations in all Froude numbers}
\keywords{shallow water equations; all Froude numbers; finite difference WENO; high order; asymptotic preserving; well-balanced}
\begin{document}
	
\maketitle
\medskip
\centerline{\scshape Guanlan Huang}
\medskip
{\footnotesize
\centerline{School of Mathematical Sciences, Xiamen University}
\centerline{Xiamen, Fujian, 361005, P.R. China}
\centerline{Email: glhuang@stu.xmu.edu.cn}
}

\medskip
\centerline{\scshape Yulong Xing\footnote{The work of this author was partially supported by the NSF grant DMS-1753581.}}
\medskip
{\footnotesize
	\centerline{Department of Mathematics, The Ohio State University}		
	\centerline{Columbus, OH 43210, USA}
	\centerline{Email: xing.205@osu.edu}
	%\centerline{The work of this author was partially supported by the NSF grant DMS-1753581.}
}

\medskip
\centerline{\scshape Tao Xiong\footnote{Corresponding author. The work of this author was partially supported by NSFC grant No. 11971025, NSF grant of Fujian Province No. 2019J06002, and the Strategic Priority Research Program of Chinese Academy of Sciences Grant No. XDA25010401.}}
\medskip
{\footnotesize
   % please put the address of the author
\centerline{School of Mathematical Sciences, Xiamen University}
\centerline{Fujian Provincial Key Laboratory of Mathematical Modeling and High-Performance Scientific Computing}
\centerline{Xiamen, Fujian, 361005, P.R. China}
\centerline{Email: txiong@xmu.edu.cn}
}

\bigskip

\begin{abstract}
In this paper, high order semi-implicit well-balanced and asymptotic preserving finite difference WENO schemes are proposed for the shallow water equations with a non-flat bottom topography. We consider the Froude number ranging from $\mathcal{O}(1)$ to $0$, which in the zero Froude limit becomes the ``lake equations" for balanced flow
without gravity waves. We apply a well-balanced finite difference WENO reconstruction, coupled with a stiffly accurate implicit-explicit (IMEX) Runge-Kutta time discretization. The resulting semi-implicit scheme can be shown to be well-balanced, asymptotic preserving (AP) and asymptotically accurate (AA) at the same time. Both one- and two-dimensional numerical results are provided to demonstrate the high order accuracy, AP property and good performance of the proposed methods in capturing small perturbations of steady state solutions.
\end{abstract}

\vspace{0.1cm}

%\tableofcontents

\section{Introduction}
\label{sec1}
\setcounter{equation}{0}
\setcounter{figure}{0}
\setcounter{table}{0}

Shallow water equations (SWEs) are widely used in the modeling of water motion flows in rivers and coastal areas. They have important applications in ocean currents and hydraulic engineering, see, e.g. \cite{majda2003,klein2004applied,vallis2017atmospheric}. Considering the water flow in river, reservoir or open channels with a non-flat bottom, the SWEs can be written as follows:
\begin{equation}
\left\{
\begin{array}{ll}
  h_t + \nabla \cdot (h\bu)   = 0, \\ [3mm]
  (h\bu)_t + \nabla \cdot (h\bu\otimes \bu) + g\nabla(h^2/2) = -gh\nabla b,
\end{array}\right.
  \label{S1_E1}
\end{equation}
where $h$ is the depth of the water layer, $\bu$ is the flow velocity, defined on a time-space domain $(t,\bx)\in\mathbb{R}^+\times\Omega$. $g$ is the gravitational constant and $b(\bx)$ is the bottom topography which is independent of time. $\otimes$ denotes the Kronecker product. When the bottom is flat, this system is equivalent to the isentropic Euler system in the homogeneous case. However, the geometrical source term changes the property of the system when a non-flat bottom topography is taken into account.

Many shock capturing schemes with explicit time discretizations have been developed to solve the SWEs with source term \eqref{S1_E1}, including high order finite difference \cite{vukovic2002eno,xing2005high,gao2017high,li2020high,wang2020new}, finite volume \cite{audusse2004fast,noelle2006well,CGP2006,KP2007,noelle2007high,xing2011advantage,liu2021new}, residual distribution methods \cite{ricchiuto2009stabilized,Ricchiuto2015} and discontinuous Galerkin schemes \cite{XZS2010,xing2014exactly,wen2020entropy,zhang2021high}, and many references therein. When solving the SWEs with source term numerically, it is important to preserve the exact conservation property (C-property) \cite{leveque1998balancing}, namely, the nonzero flux gradient should be exactly balanced by the source term in the case of a stationary water. Such schemes are named well-balanced methods. During the past few decades, there have been extensive studies on the design and analysis of well-balanced methods for various hyperbolic equations with source terms. For the SWEs \eqref{S1_E1}, the still-water stationary solution takes the form
\begin{equation}
\label{still}
h\bu = 0, \quad h+b=\text{Const.}
\end{equation}
%for a moving water, it becomes
%\begin{equation}
%h\bu = \text{Const.}, \quad \frac12|\bu|^2+g(h+b)=\text{Const.}.
%\end{equation}
Well-balanced schemes for the SWEs are able to capture small perturbations of the hydrostatic or nearly hydrostatic flows on a coarse mesh, and we refer to the above list of literatures and the survey papers \cite{Xing14,Kurganov18} for more discussions.

On the other hand, if we choose a characteristic length $l_0$, a characteristic depth $h_0$, a characteristic velocity $U_0$ and a characteristic time $t_0$, we can define the following dimensionless variables
  \begin{equation}
  \label{S1_E2}
  \hat{{\bx}} = \frac{{\bx}}{\ell_0}, \quad
  \hat{h}        = \frac{h}{h_0},        \quad
  \hat{\bu} = \frac{\bu}{U_0}, \quad
  \hat{t}        = \frac{t}{t_0},        \quad
  \hat{b}        = \frac{b}{h_0},
  \end{equation}
with which, the SWEs \eqref{S1_E1} can be non-dimensionalized as follows:
\begin{equation}
\left\{
\begin{array}{ll}
\sr\, h_t + \nabla \cdot (h\bu)   = 0, \\ [3mm]
\sr\, (h\bu)_t + \nabla \cdot (h\bu\otimes \bu) + \frac{1}{\fr^2}\nabla(h^2/2) =-\frac{1}{\fr^2}h\nabla b,
\end{array}\right.
\label{S1_E3}
\end{equation}
where we drop the hat of the dimensionless variables for ease of presentation. The Strouhal number $\sr$ and
the Froude number $\fr$ are defined as
\begin{equation}
\sr := \frac{\ell_0}{t_0U_0}, \quad \fr:= \frac{U_0}{\sqrt{gh_0}}.
\end{equation}
In case of low Froude number flows, for which the flow velocities are systematically small as compared to the velocity of gravity waves,
a reference asymptotic expansion parameter $\eps$ can be introduced according
to the Froude number, via
\begin{equation}
\fr=\eps^\alpha \ll 1,
\end{equation}
with $\alpha$ chosen depending on the particular flow regime to be considered \cite{klein2005}.

In this work, we focus on flows over advective time scale where $\sr=1$, and assume $\fr=\eps$, namely $\alpha=1$ for the inviscid balanced flow over the topography, so that the dimensionless equations \eqref{S1_E3} become
  \begin{equation}
  \label{S1_E4}
  \left\{
  \begin{array}{ll}
  h_t + \nabla \cdot (h\bu)   = 0, \\ [3mm]
  (h\bu)_t + \nabla \cdot (h\bu\otimes \bu) + \frac{1}{\eps^2}\nabla(h^2/2) =  - \frac{1}{\eps^2}h\nabla b.
  \end{array}\right.
  \end{equation}
The system is hyperbolic, and its eigenvalues in the direction ${\bf{n}}$ are $\lambda_1 = \bu\cdot{\bf{n}} + c/\eps$ and $\lambda_2 = \bu\cdot{\bf{n}} - c/\eps$, with $c=\sqrt{h}$ being the scaled speed of sound.

One could directly apply the well-balanced shock capturing schemes to the dimensionless system \eqref{S1_E4}, however, due to the fact that the characteristic speed $\lambda_{1,2}$ is inversely proportional to the Froude number $\eps$, the time step constraint of an explicit time discretization satisfies
\[
\Delta t = \text{CFL} \frac{\Delta x}{\max(|\bu|+c/\eps)}\sim \eps \Delta x,
\]
where $\Delta t$ is the time step size,  $\Delta x$ is the mesh size and CFL is the time stability CFL number. As the Froude number $\eps$ approaching to zero, this leads to the stiffness in time, which is the same as the low Mach flows, see e.g. \cite{haack2012all,degond2011all,cordier2012asymptotic}. For low Mach flows, preconditioning techniques are usually applied to release the small time step condition and cure large numerical viscosities in the shock capturing schemes  \cite{Turkel1987,viozat1997implicit,dellacherie2010analysis,Ropke2015Mach,chen2020}. Such techniques, however, are effectively applicable only if the Mach numbers are not too small. On the other hand, naive implicit time discretizations of these shock capturing schemes result in the fully nonlinear systems, which are very inefficient to solve and sometimes may not be able to converge to the correct asymptotic limit.

In between, many semi-implicit schemes are developed, e.g., for low Mach (all Mach) Euler and Navier-Stokes equations \cite{degond2011all, haack2012all, tang2012, dimarco2017study, boscarino2018all,%BoRuSc18,
boscarino2019high,cordier2012asymptotic,tavelli2017pressure,Denner2018,Jonas2020novel,Denner2020,TPK2020,busto2021semi,boscarino2021high}, and for low Froude shallow water equations \cite{giraldo2010a,giraldo2013,tumolo2013,bispen2014imex,vater2018semi,liu2020well}, and many references therein. Among them, one type of methods which can ensure the correct asymptotic limit is the asymptotic preserving (AP) scheme. AP schemes were originally introduced in \cite{jin1999efficient} for multiscale kinetic equations, namely, the discretized scheme for a stiff PDE can converge to a consistent discretization of its limiting equation, under unresolved time step and mesh sizes, with uniform stability. For a recent review of AP schemes and their applications, see \cite{hu2017ap}. For Euler or Navier-Stokes equations will all-Mach number, AP schemes can well capture its corresponding incompressible limit as the Mach number approaching to zero \cite{degond2011all,haack2012all,cordier2012asymptotic,noelle2014weakly,bispen2017asymptotic,boscarino2019high}. AP schemes have also been applied to the shallow water equations with the low Froude number limit \cite{bispen2014imex,duran2015,couderc2017,liu2019asymptotic,liu2020well}. However, for the shallow water equations with an irregular bottom topography, most of current schemes either designed focus only in the low Froude number regime, or with up to second order accuracy.

In this paper, we propose high order well-balanced asymptotic preserving weighted essentially non-oscillatory (WENO) schemes for the shallow water equations with a non-flat bottom topography and all Froude numbers. For an irregular bottom with $b(\bx)\ne0$, the nonzero flux gradient and the source term are both scaled by the Froude number in \eqref{S1_E4}. Here, considering the still water equilibrium \eqref{still}, it is important to preserve both the well-balanced property and the low Froude limit for the dimensionless system \eqref{S1_E4}. A close work along this line is the second order well-balanced asymptotic preserving scheme developed and carefully analyzed by Liu in \cite{liu2020well}, which is based on a split system for the pre-balanced shallow water equations, following the idea for all Mach flows in \cite{haack2012all}. In our work, we will combine the high order AP schemes developed for the isentropic Euler and full Euler systems with all Mach numbers \cite{boscarino2019high,boscarino2021high} in the spirit of \cite{degond2011all,tang2012}, with the well balanced finite difference WENO schemes \cite{xing2005high}, to achieve a high order well-balanced asymptotic preserving scheme for the shallow water equations with a source term. We start by constructing a first order semi-implicit scheme. Similar to the hydrostatic pressure $p_2$ introduced for the pressure in the all-Mach flow \cite{boscarino2019high,boscarino2021high}, here an $H_2$ term corresponding to the variation from a constant water surface level with respect to the total water surface $H=h+b$ is introduced.
We first solve $H_2$ from an elliptic (or Helmholtz) equation, which is formed from a semi-implicit time discretization. After evaluating $H_2$, we can update the momentum $h\bu$ and then $h$. In this way, by utilizing a well balanced flux reconstruction in the updating of $h$ and $h\bu$, we can show that our first order semi-discrete scheme achieves the well balanced and AP properties at the same time. With the aid of a multi-stage explicit-implicit (IMEX) Runge-Kutta time discretization for a partitioned autonomous system, high order semi-implicit schemes can be obtained. Corresponding suitable high order spatial discretizations can also be constructed. Specifically high order well-balanced finite difference WENO reconstruction \cite{xing2005high} for convection terms are used in this paper, with high order central difference discretizations of second order and mixed derivatives in the elliptic (or Helmholtz) equation of $H_2$. The resulting high order semi-implicit scheme is showed to satisfy the well-balanced, AP and asymptotically accurate (AA) properties simultaneously, namely, the scheme is not only consistent (AP property) but also preserves the order of accuracy in time (AA property) in the stiff limit as $\eps\rightarrow 0$ \cite{Lorenzo2005}.

The rest of the paper is organized as follows. In Section \ref{sec2}, the low Froude limit of the SWEs is revisited. In Section \ref{sec3}, a well balanced AP scheme based on a first order semi-implicit scheme is first described and then generalized to high order methods. The analysis of well-balanced property, as well as AP and AA properties, follows afterward. Numerical experiments are presented in Section \ref{sec4}, which demonstrate the good performance of the high order well-balanced AP scheme in nearly hydrostatic flows and for a range of the Froude numbers including the zero Froude number limit. Conclusions are made in Section \ref{sec5}.

\section{Low Froude number limit for SWEs}
\label{sec2}
\setcounter{equation}{0}
\setcounter{figure}{0}
\setcounter{table}{0}

%%%%%%%%%%%%%%%%%%%%%%%%%%%%%%%%%%%%%%%%%%%%%%%%%%%%%%%%%%%%%%%%%%%%%%%%%%%
%%%%%%%%%%%%%%%Part 1
Let us denote $H=h+b$ as the water surface level, and the system \eqref{S1_E4} can be written as
  \begin{equation}
  \label{S2_E1}
\left\{
\begin{array}{ll}
  h_t + \nabla \cdot (h\bu)   = 0,\\ [3mm]
  (h\bu)_t + \nabla \cdot (h \bu \otimes \bu) + \frac{1}{\eps^2}h\nabla H = 0.
\end{array}\right.
  \end{equation}
We start with the following single-scale expansions of the solutions $h$  and $\bu$, in terms of $\eps$,
\begin{equation}
  \label{S2_E2}
\left\{
\begin{array}{ll}
  h(\bx,t) = h_0(\bx,t) + \eps h_1(\bx,t) +  \eps^2 h_2(\bx,t) + \cdots,\\ [3mm]
  \bu(\bx,t) = \bu_0(\bx,t) + \eps \bu_1(\bx,t) + \eps^2 \bu_2(\bx,t) + \cdots.
\end{array}\right.
\end{equation}
Since $H=h+b$ with $b=b(\bx)$ being time independent, we have
\begin{equation}
\label{S2_H}
H(\bx,t) = h_0(\bx,t)+ b(\bx) + \eps h_1(\bx,t) +  \eps^2 h_2(\bx,t) + \cdots
\end{equation}
Substituting (\ref{S2_E2}) and \eqref{S2_H} into (\ref{S2_E1}), equating to zero
for different orders of $\eps$, we have
\bit
\item $\mathcal{O}(\eps^{-2})$
  \begin{equation}
  \label{S2_E41}
  h_0 \nabla (h_0+b) =0,
  \end{equation}
  \item $\mathcal{O}(\eps^{-1})$
  \begin{equation}
  \label{S2_E42}
  h_1 \nabla (h_0+b) + h_0\nabla h_1 =0,
  \end{equation}
  \item $\mathcal{O}(\eps^{0})$
  \begin{equation}
  \label{S2_E43}
  \left\{
  \begin{array}{ll}
   (h_0)_t + \nabla \cdot (h_0\bu_0)   = 0,\\ [3mm]
   (h_0\bu_0)_t + \nabla \cdot (h_0 \bu_0 \otimes \bu_0) + h_2\nabla(h_0+b)+h_1\nabla h_1+ h_0\nabla h_2 = 0.
  \end{array}\right.
  \end{equation}
  \eit
  {Here for simplicity, no dry area is considered to exist in the domain so that $h_0\ne0$. Therefore, from \eqref{S2_E41} we obtain}
  \begin{equation}
  \label{S2_E5}
  h_0+b \equiv H_0(t),
  \end{equation}
  namely, $h_0+b$ is constant in space. It follows from \eqref{S2_E42} that $h_1\equiv H_1(t)$ is also constant in space. Since the bottom topography $b$ is assumed to be time independent, from \eqref{S2_E43}, we have
    \begin{subequations}
  \label{S2_E6}
  \begin{equation}
    \label{S2_E6a}
  \nabla \cdot (h_0\bu_0)   = -\frac{dH_0(t)}{dt},
  \end{equation}
  \begin{equation}
    \label{S2_E6b}
  (h_0\bu_0)_t + \nabla \cdot (h_0 \bu_0 \otimes \bu_0) + h_0\nabla h_2 = 0.
  \end{equation}
  \end{subequations}
   Now integrating the equation (\ref{S2_E6a}) over the spatial domain $\Omega$, it yields
  \begin{equation}
\label{S2_E7a}
\frac{dH_0(t)}{dt}=-\frac{1}{|\Omega|}\int_\Omega\nabla \cdot (h_0\bu_0)d\sigma=-\frac{1}{|\Omega|}\int_{\partial\Omega} h_0\bu_0\cdot\bn ds,
\end{equation}
where ${\bf n}$ is the unit outward normal vector along $\partial \Omega$, namely the time change of the total water height is given by the total flux of water across the domain boundary. \eqref{S2_E6} and \eqref{S2_E7a} form the classical zero Froude number shallow water equations, also known as the ``lake equations" \cite{greenspan1968theory,klein2005}.  If considering the no-slip $\bu\cdot \bn = 0$ or periodic boundary conditions, we further get $\int_{\Omega} \nabla \cdot (h_0 \bu_0) \,d{\bf x} = \int_{\partial \Omega} h_0 \bu_0 \cdot {\bf n}\,ds= 0$. This implies $H_0$ is constant both in space and time, i.e. $H_0 = $ Const. The same conclusion can also be derived for $H_1$. Therefore, the ``lake equations" further reduce to:
\begin{equation}\label{lake}
  \left\{
  \begin{array}{l}
  \nabla \cdot (h_0 \bu_0) = 0, \quad h_0+b=H_0=\text{Const.}\\ [3mm]
  \displaystyle \partial_t (h_0\bu_0) + \nabla \cdot \left( h_0 \bu_0 \otimes \bu_0 \right) + h_0\nabla h_2 = 0.
  \end{array}
  \right.
\end{equation}
A rigrous convergence analysis for the zero Froude limit from \eqref{S2_E1} to \eqref{lake} is very demanding, and we refer to \cite{klainerman1981singular,klainerman1982compressible} for such a rigorous study in the low Mach limit.

\section{Numerical schemes}
\label{sec3}
\setcounter{equation}{0}
\setcounter{figure}{0}
\setcounter{table}{0}

In this section, we will construct and analyze a class of high order finite difference schemes with the AP and well-balanced properties for the shallow water equations \eqref{S2_E1} with a range of Froude numbers. The SWEs in the form of \eqref{S2_E1} are very close to the isentropic Euler equations with all-Mach number, see e.g. \cite{degond2011all,boscarino2019high}.
However, it differs in the zero Froude limit, where in the isentropic Euler system, $\rho_0$ (corresponding to $h_0$ here) is constant, and it has the divergence free velocity field $\nabla\cdot \bu_0=0$. Here, $h_0$ is not a constant directly due to the appearance of source term. For the isentropic Euler equation in the zero Mach limit, the hydrodynamic pressure $p_2$ (corresponding to $H_2$ here) plays a role as a Lagrangian multiplier to ensure the divergence free condition, fortunately $H_2$ performs similarly in this setting. In the following, we will extend the high order semi-implicit finite difference WENO schemes developed in \cite{boscarino2019high} to solve \eqref{S2_E1}. We will analyze that the scheme can capture the zero Froude number shallow water equations, or the lake equations \eqref{lake} with no-slip or periodic boundary conditions, namely the scheme is asymptotic preserving.

For the shallow water equations with non-flat bottom topography, the well-balanced property is another important one, especially for capturing small perturbations of a still water equilibrium \cite{xing2005high}. We will adopt the well-balanced finite difference WENO reconstruction technique as developed in \cite{xing2005high}, tailored to our semi-implicit time discretization. We will show that under our semi-implicit framework, the well-balanced property can also be obtained.

\subsection{First order semi-implicit scheme}
We start with presenting a first order semi-implicit time discretization, while keeping space continuous at this moment. %Following the work for the isentropic Euler equations \cite{boscarino2019high},
The first order semi-implicit implicit-explicit (IMEX) scheme for \eqref{S2_E1} is given as follows
\begin{equation}
\label{S3_E1}
\left\{
  \begin{aligned}
  &\frac{h^{n+1} - h^n}{\Delta t} + \nabla \cdot (h\bu)^{n+1}   = 0,\\
  &\frac{(h\bu)^{n+1} -(h\bu)^n}{\Delta t} + \nabla \cdot\left(\frac{h\bu\otimes h\bu}{h}\right)^n  + \frac{1}{\eps^2}h^{n+1}\nabla H^{n+1} =0.
  \end{aligned}
  \right.
\end{equation}
Notice that $H^{n+1}=h^{n+1}+b$. From the second equation of (\ref{S3_E1}), we can first express $(h\bu)^{n+1}$ in terms of $h^{n+1}$, $H^{n+1}$ and other variables at time level $t^n$. Substituting it into the first equation of (\ref{S3_E1}), we get
\begin{equation}
\label{S3_E2}
\left\{
  \begin{aligned}
  &\frac{h^{n+1} - h^n}{\Delta t} + \nabla \cdot (h\bu)^n -\Delta t \nabla ^2:\left(\frac{h\bu\otimes h\bu}{h}\right)^n
  -\frac{\Delta t}{\eps^2}\nabla\cdot(h^{n+1}\nabla H^{n+1}) =0,\\
  &\frac{(h\bu)^{n+1} -(h\bu)^n}{\Delta t} + \nabla \cdot\left(\frac{h\bu\otimes h\bu}{h}\right)^n  + \frac{1}{\eps^2}h^{n+1}\nabla H^{n+1} =0,
  \end{aligned}
  \right.
\end{equation}
where $:$ is the tensor double dot product. The first equation of \eqref{S3_E2} appears to be a nonlinear system for $h^{n+1}$, as $H^{n+1}=h^{n+1}+b$. To avoid it, a slight modification of the first equation yields
\begin{equation} \label{eq3.3}
\frac{h^{n+1} - h^n}{\Delta t} + \nabla \cdot (h\bu)^n -\Delta t \nabla ^2:\left(\frac{h\bu\otimes h\bu}{h}\right)^n
  -\frac{\Delta t}{\eps^2}\nabla\cdot(h^n\nabla H^{n+1}) = 0,
\end{equation}
which is now a linear equation for the unknown function $h^{n+1}$. Similarly, the corresponding term in the second equation of \eqref{S3_E2} can be replaced by $\frac{1}{\eps^2}h^n\nabla H^{n+1}$, which is easier for the generalization to high order described in the following subsection.

To deal with the stiff diffusive term $\frac{1}{\eps^2}\nabla\cdot(h^n\nabla H^{n+1})$, in this work, we consider the no-slip or periodic boundary conditions, namely, $h_0+b=H_0$ and $h_1=H_1$ are both constants in the asymptotic expansion \eqref{S2_H}. We may now introduce a water surface perturbation $H_2$, defined as
\begin{equation}
\label{H2}
H_2 = \frac{H-\bar{H}}{\eps^2},
\end{equation}
where $\bar{H}$ denotes the spatial average of the water surface level $H$ (computed from $h+b$). In this way, the term $H_2$ in the zero Froude shallow
water limit converges to $h_2$, which remains finite. Numerically, we take $\bar{H}$ as the spatial average of $H^n$, that is
\begin{equation}
\label{S3_Hbar}
H^{n+1}=\bar{H}^n + \eps^2 H^{n+1}_2 \quad \text{ and }\quad h^{n+1}=\bar{H}^n-b+\eps^2 H^{n+1}_2,
\end{equation}
so that we obtain a linear elliptic equation for $H^{n+1}_2$ from \eqref{eq3.3}
\begin{equation}
\label{S3_H2}
\eps^2 H^{n+1}_2 -\Delta t^2\nabla\cdot(h^n\nabla H^{n+1}_2) = h^*,
\end{equation}
with
\begin{equation}
\label{S3_hstar}
h^*=H^n-\bar{H}^n-\Delta t \left(\nabla \cdot (h\bu)^n -\Delta t \nabla ^2:\left(\frac{h\bu\otimes h\bu}{h}\right)^n\right).
\end{equation}
After obtaining $H^{n+1}_2$ from \eqref{S3_H2}, $h^{n+1}$ and $H^{n+1}$ can be updated from \eqref{S3_Hbar}. In 
equation \eqref{S3_E2}, we can replace $1/\eps^2h^{n+1}\nabla H^{n+1}$ by $h^n\nabla H^{n+1}_2$, which leads to
\begin{equation}
\label{S3_hu}
\left\{
\begin{aligned}
&\frac{h^{n+1} - h^n}{\Delta t} + \nabla \cdot (h\bu)^n -\Delta t \nabla ^2:\left(\frac{h\bu\otimes h\bu}{h}\right)^n
-\nabla\cdot(h^{n}\nabla H_2^{n+1}) =0,\\
&\frac{(h\bu)^{n+1} -(h\bu)^n}{\Delta t} + \nabla \cdot\left(\frac{h\bu\otimes h\bu}{h}\right)^n  + h^{n}\nabla H_2^{n+1} =0.
\end{aligned}
\right.
\end{equation}
We can solve the second equation of \eqref{S3_hu} for $(h\bu)^{n+1}$.
This semi-implicit treatment in time can ensure the right asymptotic limit as the Froude number $\eps\rightarrow 0$, which
is known as the AP property. We will analyze it afterward.
In general, direct computing $h^{n+1}$ from \eqref{S3_Hbar} cannot preserve exact mass conservation, and we may further update $h^{n+1}$ using the first equation of \eqref{S3_E1} with the available $(h\bu)^{n+1}$.

Next we will discuss the spatial discretizations according to the first order semi-implicit time discretization. The main guidance is to preserve the equilibrium state for a still water when $H=h+b=$ Const. and $h\bu ={\bf 0}$, and also avoid excessive numerical viscosity inversely proportional to the Froude number $\eps$. %Such schemes are named well-balanced (WB) schemes or schemes satisfying C-property \cite{xing2005high,noelle2009high}.
We follow both the well-balanced finite difference scheme developed in \cite{xing2005high}, and the spatial discretizations for the all-Mach isentropic Euler equations in \cite{boscarino2019high}. First or second low order discretizations will be described first, and high order extensions will be presented afterward.

To preserve the still water equilibrium for the water surface level $H$, it is more convenient to rewrite the first equation of \eqref{S3_E1} in a pre-balanced form, namely
\begin{equation}
\label{S3_H}
\frac{H^{n+1} - H^n}{\Delta t} + \nabla \cdot (h\bu)^{n+1} = 0,
\end{equation}
which is equivalent to the original equation since $b$ is independent of time.
In the case of still-water equilibrium \eqref{still}, to preserve the water surface level $H=$ Const, it requires that no numerical viscosity should be presented in the numerical approximation of the flux term $\nabla \cdot (h\bu)^{n+1}$. Therefore, the numerical viscosity term should depend on $H$ instead of $h$, and a Lax-Friedrichs flux for $\nabla \cdot (h\bu)$ (we drop the superindex $n+1$ for brevity) is defined as follows:
\begin{subequations}
\label{S3_LF}
\begin{equation}
  (\widehat{hu})_{i+\frac{1}{2},j} =
\frac{1}{2}\left[(hu)_{i+1,j} + (hu)_{i,j} -
\alpha^x_{i,j}(H_{i+1,j} - H_{i,j})\right],
\end{equation}
\begin{equation}
(\widehat{hv})_{i,j+\frac{1}{2}} =
\frac{1}{2}\left[(hv)_{i,j+1} + (hv)_{i,j} -
\alpha^y_{i,j}(H_{i,j+1} - H_{i,j})\right],
\end{equation}
\end{subequations}
where $\alpha^x_{i,j}=\max_{h,hu}(|u|+\min(1,1/\eps)\sqrt{h})$ and $\alpha^y_{i,j}=\max_{h,hv}(|v|+\min(1,1/\eps)\sqrt{h})$ are the local viscosity coefficients along $x$ and $y$ respectively.
We denote
\begin{equation}
\label{S3_DLF}
\nabla_{LF}\cdot (h\bu) = \frac{(\widehat{hu})_{i+\frac{1}{2},j}-(\widehat{hu})_{i-\frac{1}{2},j}}{\Delta x}+\frac{(\widehat{hv})_{i,j+\frac{1}{2}}-(\widehat{hv})_{i,j-\frac{1}{2}}}{\Delta y}.
\end{equation}
It is easy to see $\nabla_{LF}\cdot (h\bu) = 0$ if $H=h+b=$ Const. and $\bu=(u,v)=(0,0)$.

For the second equation of \eqref{S3_hu}, it does not matter very much how to choose the numerical fluxes since the numerical viscosity depends on $h\bu$ which is $0$ for still water, e.g., a local Lax-Friedrichs flux for the second term $\nabla \cdot \left(h\bu\otimes h\bu/h\right)$ is
\begin{subequations}
\label{S3_LFhu}
\begin{equation}
\left (\widehat{hu^2}\right)_{i+\frac{1}{2},j} =
\frac{1}{2}\left[\left(hu^2\right)_{i+1,j} + \left(hu^2\right)_{i,j} -
\alpha^x_{i,j}\Big((hu)_{i+1,j} - (hu)_{i,j}\Big)\right],
\end{equation}
\begin{equation}
\left (\widehat{huv}\right)_{i,j+\frac{1}{2}} =
\frac{1}{2}\left[\left(huv\right)_{i,j+1} + \left(huv\right)_{i,j} -
\alpha^y_{i,j}\Big((hu)_{i,j+1} - (hu)_{i,j}\Big)\right],
\end{equation}
\begin{equation}
\left (\widehat{huv}\right)_{i+\frac{1}{2},j} =
\frac{1}{2}\left[\left(huv\right)_{i+1,j} + \left(huv\right)_{i,j} -
\alpha^x_{i,j}\Big((hv)_{i+1,j} - (hv)_{i,j}\Big)\right],
\end{equation}
\begin{equation}
\left (\widehat{hv^2}\right)_{i,j+\frac{1}{2}} =
\frac{1}{2}\left[\left(hv^2\right)_{i,j+1} + \left(hv^2\right)_{i,j} -
\alpha^y_{i,j}\Big((hv)_{i,j+1} - (hv)_{i,j}\Big)\right],
\end{equation}
\end{subequations}
and $\alpha^x_{i,j}$ and $\alpha^y_{i,j}$ are the local viscosity coefficients which can be taken the same as above.

Notice that preserving the still water equilibrium in \eqref{S3_hu} is to require $(h\bu)^{n+1}={\bf 0}$, which can be satisfied from requiring $h^n\nabla H^{n+1}_2 = 0$. However, a straightforward numerical discretization of the term $h\nabla H_2$ may lead to a nonconservative discretization, even in the special case of $b=0$ when such term should be treated in the conservative manner. This issue has been addressed in the well-balanced WENO methods studied in \cite{xing2005high}. By adopting such idea to decompose the source term, at the continuous level using the relation \eqref{H2} we can rewrite this term as
\beq
\label{presplit}
h\nabla H_2 = \frac{1}{\eps^2} h\nabla H = \frac{1}{\eps^2} \nabla \left(\frac12 h^2 - \frac12 b^2 \right) + \frac{1}{\eps^2} H\nabla b = \nabla\left(\bar H H_2+\frac12\eps^2 H^2_2-H_2b\right)+H_2\nabla b.
\eeq
Taking $H_2$ as $H^{n+1}_2$ and using central differences for a low order spatial discretization for both terms, that is 
$$
\nabla_C\bigg(\bar H^n H^{n+1}_2+\frac12\eps^2 (H^{n+1}_2)^2-H^{n+1}_2 b\bigg)+H^{n+1}_2\nabla_C b,
$$ 
which is still $0$ when $H_2^{n+1}\equiv 0$.

%\YX{When $b=0$, i.e., no source term, will the second equation have mass conservation? In other words, is the proposed method conservative, hence satisfying the Lax-Wendroff theorem?}
%\YX{If not, we can try to use the source term decomposition idea as in \cite{xing2005high}.}

The remaining spatial discretizations for \eqref{S3_H2} and \eqref{S3_hstar} are as follows. We use central difference discretization for the second order derivatives terms, denoted with subindex $C$, where
\begin{equation}
\begin{aligned}
\nabla^2_C:\left(\frac{h\bu\otimes h\bu}{h}\right)
=& \frac{1}{\Delta x^2}\Big[(hu^2)_{i+1,j}-2(hu^2)_{i,j} + (hu^2)_{i-1,j}\Big]\\
&+\frac{1}{2\Delta x \Delta y}\Big[\Big((huv)_{i+1,j+1} - (huv)_{i-1.j+1}\Big)
- \Big((huv)_{i+1,j-1} - (huv)_{i-1,j-1}\Big)\Big]\\
&+\frac{1}{\Delta y^2}\Big[(hv^2)_{i,j+1}-2(hv^2)_{i,j} + (hv^2)_{i,j-1}\Big].
\end{aligned}
\end{equation}
For the term $\nabla \cdot(h^n\nabla H^{n+1})$ expressing in the form
\[
\nabla \cdot(h^n\nabla H^{n+1})
=\partial_x(h^n\partial_xH^{n+1})
+\partial_y(h^n\partial_yH^{n+1}),
\]
we may take a compact central difference for terms like $(a(x,y)q_x)_x$ at the grid point $(x_i,y_j)$
\[
(a(x,y)q_x)_x\Big|_{(x_i, y_j)}
=\frac{1}{\Delta x^2}
(a_{i-1,j}, a_{i,j}, a_{i+1,j})
\begin{pmatrix}
\frac{1}{2} & -\frac{1}{2} & 0\\
\frac{1}{2} & -1           &\frac{1}{2}\\
0           & -\frac{1}{2} & \frac{1}{2}
\end{pmatrix}
\begin{pmatrix}
q_{i-1,j}\\
q_{i,j}    \\
q_{i+1,j}
\end{pmatrix},
\]
and similar approximation can be done for $(a(x,y)q_y)_y$ along the $y$ direction. This will form a positive definite linear system for the left side of \eqref{S3_H2}, if $h^n$ keeps positive. We denote the numerical approximation of $\nabla \cdot(h^n\nabla H^{n+1})$ by $\nabla_{C^2} \cdot(h^n\nabla H^{n+1})$. Lastly
$\nabla\cdot(h\bu)^n$ is discretized the same as in \eqref{S3_DLF}, so that on the right side of \eqref{S3_H2}
\begin{equation}
\label{S3_hstar2}
h^*=H^n-\bar{H}^n-\Delta t \left(\nabla_{LF} \cdot (h\bu)^n -\Delta t \nabla_C ^2:\left(\frac{h\bu\otimes h\bu}{h}\right)^n\right),
\end{equation}
which is clearly $0$ for $H=$ Const. and $h\bu = {\bf 0}$. With such discretizations, solving $H^{n+1}_2$ from \eqref{S3_H2} yields $H^{n+1}_2\equiv 0$, so the well-balanced property for the still water is well preserved.

We now summarize the first order semi-implicit scheme as follows:
\begin{equation}
\label{S3_1st}
\left\{
\begin{aligned}
&\eps^2 H^{n+1}_2 -\Delta t^2\nabla_{C^2}\cdot(h^n\nabla H^{n+1}_2) =H^n-\bar{H}^n-\Delta t \left(\nabla_{LF} \cdot (h\bu)^n -\Delta t \nabla ^2_C:\left(\frac{h\bu\otimes h\bu}{h}\right)^n\right),\\
%& H^{n+1}=\bar{H}^n + \eps^2 H^{n+1}_2 \quad \text{ and }\quad h^{n+1}=\bar{H}^n-b+\eps^2 H^{n+1}_2, \\
&\frac{(h\bu)^{n+1} -(h\bu)^n}{\Delta t} + \nabla_{LF} \cdot\left(\frac{h\bu\otimes h\bu}{h}\right)^n  + \nabla_C\bigg(\bar H^n H^{n+1}_2+\frac12\eps^2 (H^{n+1}_2)^2-H^{n+1}_2 b\bigg)+H^{n+1}_2\nabla_C b =0, \\
&\frac{h^{n+1} - h^n}{\Delta t} + \nabla_{LF} \cdot (h\bu)^{n+1}   = 0,
\end{aligned}
\right.
\end{equation}
which is performed in a sequential way.

%%%%%%%%%%%%%%%%%%%%%%%%%%%%%%%%%%%%%%%%%
\subsection{High order semi-implicit scheme}
To extend the first order semi-implicit scheme to high order, we follow a similar procedure as described in \cite{boscarino2019high,boscarino2018all}. For ease of presentation, we keep space continuous first. Let's write \eqref{S2_E1} as an autonomous system
\begin{equation}
\label{S3_E5}
U_t = \mathcal{H} (U,U),
\end{equation}
where $U=(h,h\bu)^T$ and $\mathcal{H}: \mathbb{R}^n \times \mathbb{R}^n \to \mathbb{R}^n $ is a sufficiently regular mapping. We use two different arguments
for $U$ with different treatments, one is explicit with subindex ``E" and the other is implicit with subindex ``I", that is $U_E=(h_E, (h\bu)_E)^T$ and $U_I=(h_I, (h\bu)_I)^T$, and we solve
\begin{equation}\label{PartitionedSyst}
\left\{
\begin{array}{l}
U'_E =  \mathcal{H}(U_E,U_I),\\[3mm]
U'_I =  \mathcal{H}(U_E,U_I),
\end{array}
\right.
\end{equation}
where we define
\begin{equation}
	\label{S3_E6}
	\mathcal{H}(U_E,U_I)
	=
	\begin{pmatrix}
	-\nabla \cdot (h\bu)_I,  \\ \, \\
	-\nabla \cdot \left(h\bu\otimes \bu\right)_E
	-h_E\nabla H_{I,2}
	\end{pmatrix}.
\end{equation}
$H_{I,2}$ is defined similarly as in \eqref{H2}
\begin{equation}
H_{I,2}=\frac{H_I-\bar{H}_E}{\eps^2}=\frac{h_I+b-\bar{H}_E}{\eps^2},
\end{equation}	
and $\bar{H}_E$ is the spatial average of $h_E+b$. For the first order semi-implicit scheme, $U_E=U^n=(h^n,(h\bu)^n)^T$ and $U_I=U^{n+1}=(h^{n+1},(h\bu)^{n+1})^T$.

For the partitioned system (\ref{PartitionedSyst}), we need to apply an IMEX Runge-Kutta time discretization with a double Butcher $tableau$ \cite{butcher2016},
\begin{equation}\label{DBT}
\begin{array}{c|c}
\tilde{c} & \tilde{A}\\
\hline
\vspace{-0.25cm}
\\
& \tilde{b^T} \end{array} \ \ \ \ \ \qquad
\begin{array}{c|c}
{c} & {A}\\
\hline
\vspace{-0.25cm}
\\
& {b^T} \end{array},
\end{equation}
where $\tilde{A} = (\tilde{a}_{ij})$ is an $s \times s$ matrix for an explicit scheme, with $\tilde{a}_{ij}=0$ for $j \geq i$ and $A = ({a}_{ij})$ is an  $s \times s$ matrix for an implicit scheme. For the implicit part of the methods, we use a diagonally implicit scheme, i.e. $a_{ij}=0$, for $j > i$, in order to guarantee simplicity and efficiency in solving the algebraic equations corresponding to the implicit part of the discretization. The vectors $\tilde{c}=(\tilde{c}_1,...,\tilde{c}_s)^T$, $\tilde{b}=(\tilde{b}_1,...,\tilde{b}_s)^T$, and $c=(c_1,...,c_s)^T$, $b=(b_1,...,b_s)^T$ complete the characterization of the scheme. The coefficients $\tilde{c}$ and $c$ are given by the usual relation
\begin{eqnarray}\label{eq:candc}
\tilde{c}_i = \sum_{j=1}^{i-1} \tilde a_{ij}, \ \ \ c_i = \sum_{j=1}^{i} a_{ij}.
\end{eqnarray}
For the first order semi-implicit scheme, it corresponds to $s=1$, and the double Butcher Tableau is
\begin{equation*}\label{Afirst}
\begin{array}{c|c}
0 & 0 \\
\hline
&1
\end{array} \qquad\qquad
\begin{array}{c|c}
1 & 1     \\
\hline
&  1
\end{array},
\end{equation*}
namely $U_E=U^n$ and $U_I=U^{n+1}$.

For a high order semi-implicit scheme, a multi-stage IMEX Runge-Kutta is needed, usually it is characterized as the triplet $(s, \sigma, p)$, for the number of stages of the implicit scheme ($s$), the number of stages of the explicit scheme ($\sigma$) and the order of the scheme ($p$). Here we adopt the IMEX scheme as constructed in \cite{boscarino2021high}, which we require
$\sigma=s$ with $s$ stages for both implicit and explicit parts, and $\tilde{c}_i=c_i$ for $i=2,\cdots,s$.

Now, we may update the solutions as follows. Starting from $U_E^{(0)}=U_I^{(0)}=U^n$, for inner stages $i = 1  \text{ to }  s$:
\bit
\item First update the solution $U_E^{(i)}$ for the explicit part
\begin{equation}
  \label{S3_E7}
  U_E^{(i)} = U^n + \Delta t\sum^{i-1}_{j=1}\tilde{a}_{ij}\mathcal{H}(U_E^{(j)},U_I^{(j)}).
\end{equation}
\item Update the known values for the implicit part $U_*^{(i)}$, where
  \begin{equation}
  \label{S3_E8}
  U_*^{(i)} = U^n + \Delta t\sum^{i-1}_{j=1}a_{ij}\mathcal{H}(U_E^{(j)},U_I^{(j)}),
  \end{equation}
and then solve
   \begin{equation}
   \label{S3_E9}
   U_I^{(i)} = U_*^{(i)} + \Delta ta_{ii}\mathcal{H}(U_E^{(i)},U_I^{(i)}).
   \end{equation}
\item Finally, the solution $U^{n+1}$ at time level $t^{n+1}$ is accumulated by
\begin{equation}
\label{S3_E15}
U^{n+1}  = U^n + \Delta t \sum_{i=1}^s b_i \mathcal{H}(U_E^{(i)},U_I^{(i)}).
\end{equation}
\eit
In components, the procedures corresponding to $U^{(i)}_E$ and $U^{(i)}_*$ are
\begin{subequations}
\label{S3_UE}
\begin{equation}
h_E^{(i)} = h^n - \Delta t\sum_{j=1}^{i-1}\tilde{a}_{ij}\nabla\cdot (h\bu)_I^{(j)},
\end{equation}
\begin{equation}
(h\bu)_E^{(i)} =(h\bu)^n - \Delta t\sum_{j=1}^{i-1}\tilde{a}_{ij}\left(\nabla \cdot \left(\frac{h\bu\otimes h\bu}{h}\right)_E^{(j)}+h^{(j)}_E\nabla H_{I,2}^{(j)}\right),
\end{equation}
\end{subequations}
\begin{subequations}
	\label{S3_Ustar}
	\begin{equation}
	h_*^{(i)} = h^n - \Delta t\sum_{j=1}^{i-1}a_{ij}\nabla\cdot (h\bu)_I^{(j)},
	\end{equation}
	\begin{equation}
	(h\bu)_*^{(i)} =(h\bu)^n - \Delta t\sum_{j=1}^{i-1}a_{ij}\left(\nabla \cdot \left(\frac{h\bu\otimes h\bu}{h}\right)_E^{(j)}+h^{(j)}_E\nabla H_{I,2}^{(j)}\right),
	\end{equation}
\end{subequations}
and for $U^{(i)}_I$ it takes the form
\begin{subequations}
    \label{S3_E11}
	\begin{equation}
    h_I^{(i)} = h_*^{(i)} - {a}_{ii}\Delta t\nabla\cdot (h\bu)_I^{(i)} ,
    \end{equation}
    \begin{equation}
    (h\bu)_I^{(i)} =(h\bu)_*^{(i)} -{a}_{ii}\Delta t\left(\nabla \cdot \left(\frac{h\bu\otimes h\bu}{h}\right)_E^{(i)}+h^{(i)}_E\nabla H_{I,2}^{(i)}\right).
	\end{equation}
\end{subequations}
In order to solve the implicit components in \eqref{S3_E11}, a similar fashion as in the first order case can be followed. By substituting $(h\bu)_I^{(i)}$ from the second equation into the first equation, replacing $h^{(i)}_I$ by $h^{(i)}_I=\bar{H}^{(i)}_E+\eps^2 H^{(i)}_{I,2}$, where $\bar{H}^{(i)}_E$ is the spatial average of $h^{(i)}_E+b$, we obtain
\begin{equation}
\label{S3_E12}
\eps^2 H^{(i)}_{I,2} -(a_{ii}\Delta t)^2\nabla\cdot\Big(h^{(i)}_E\nabla H^{(i)}_{I,2}\Big) = h^{**},
\end{equation}
with
\begin{equation}
\label{S3_E13}
h^{**}=h^{(i)}_*+b-\bar{H}^{(i)}_E-a_{ii}\Delta t\left(\nabla \cdot (h\bu)^{(i)}_* -a_{ii}\Delta t \nabla ^2:\left(\frac{h\bu\otimes h\bu}{h}\right)^{(i)}_E\right).
\end{equation}
Lastly, the equations \eqref{S3_E15} can be rewritten as
\begin{subequations}
	\label{S3_Un1}
\begin{equation}
h^{n+1} = h^n - \Delta t\sum_{i=1}^{s}b_i\nabla\cdot (h\bu)_I^{(i)},
\end{equation}
\begin{equation}
(h\bu)^{n+1} =(h\bu)^n - \Delta t\sum_{i=1}^{s}b_{i}\left(\nabla \cdot \left(\frac{h\bu\otimes h\bu}{h}\right)_E^{(i)}+h^{(i)}_E\nabla H_{I,2}^{(i)}\right).
\end{equation}
\end{subequations}

For high order in space, we will adopt the finite difference WENO reconstruction \cite{jiang1996efficient,shu1998essentially,shu2009high} for the first order convection terms, and central difference for the second order and mixed derivatives. The numerical fluxes for the convection terms are chosen in the same spirit as the first order case described above.

We take $(hu)_x$ in the convection term $\nabla\cdot(h\bu)$ as an example, and omit the indexes for brevity. For high order finite difference reconstruction, the flux needs to split into an upwind and another downwind part, e.g., for the Lax-Friedrichs flux splitting, we have
\begin{equation}
\label{splitflux}
(hu)^\pm_{i+\ell,j}=\frac12\Big((hu)_{i+\ell,j}\pm\alpha^x_{i,j}H_{i+\ell,j}\Big), \quad \ell=-r,\cdots,r,
\end{equation}
where $\alpha^x_{i,j}=\max_{h,hu}(|u|+\min(1,1/\eps)\sqrt{h})$ is the local numerical viscosity coefficient over the stencil $S=\{(i-r,j),\cdots,(i+r,j)\}$.
It is also important to take $H$ instead of $h$ in \eqref{splitflux}, in order to preserve the still water equilibrium.
The split fluxes can be used to reconstruct $(\widehat{hu})^\mp_{i\pm\frac12,j}$ based on upwind and downwind WENO reconstructions. In our numerical section, a fifth order finite difference WENO reconstruction with $r=2$ is use. The numerical flux for $(hu)_x$ is defined as
\begin{equation}
(\widehat{hu})_{i+\frac12,j}=(\widehat{hu})^-_{i+\frac12,j}+(\widehat{hu})^+_{i+\frac12,j}.
\end{equation}
The numerical flux $(\widehat{hv})_{i,j+\frac12}$ along the $y$ direction can be defined similarly.
With these numerical fluxes, the convection term $\nabla\cdot(h\bu)$ can be approximated by $\nabla_{LF}\cdot(h\bu)$ as defined in \eqref{S3_DLF}.
The term $\nabla \cdot \left(h\bu\otimes h\bu/h\right)$ in the momentum equation can be approximated in a similar way by the high order finite difference WENO reconstruction as $\nabla\cdot(h\bu)$, e.g., for $(hu^2)_x$ and $(huv)_y$ in the momentum equation of $hu$, a Lax-Friedrichs flux splitting is taken as
\begin{subequations}
\label{splitflux2}
\begin{equation}
(hu^2)^\pm_{i+\ell,j}=\frac12\Big((hu^2)_{i+\ell,j}\pm\alpha^x_{i,j}(hu)_{i+\ell,j}\Big), \quad \ell=-r,\cdots,r,
\end{equation}
\begin{equation}
(huv)^\pm_{i,j+\ell}=\frac12\Big((huv)_{i,j+\ell}\pm\alpha^y_{i,j}(hu)_{i,j+\ell}\Big), \quad \ell=-r,\cdots,r.
\end{equation}
\end{subequations}
Similarly for $(huv)_x$ and $(hv^2)_y$ in the momentum equation of $hv$, so that we get the approximation for  $\nabla \cdot \left(h\bu\otimes h\bu/h\right)$, which is still denoted as  $\nabla_{LF} \cdot \left(h\bu\otimes h\bu/h\right)$.

For the second order derivative terms appeared in $\nabla^2:(\frac{h\bu\otimes h\bu}{h})$, a high order central difference discretization is used, which is denoted as $\nabla^2_C:(\frac{h\bu\otimes h\bu}{h})$. In our numerical section, we take a fourth order central difference discretization.
For example, along the $x$ direction, we approximate $q_{xx}$ by
\[
q_{xx}|_{x=x_i} = \frac{-q_{i-2} + 16q_{i-1} -30q_{i} + 16q_{i+1} - q_{i+2}}{12\Delta x^2}+\mathcal{O}(\Delta x^4).
\]
For the mixed derivative term $q_{xy}$, it is discretized  dimension-by-dimension with a fourth order central difference scheme along each direction, e.g., along the $x$ direction
\[
q_x|_{x=x_i} = \frac{q_{i-2} - 8q_{i-1} + 8q_{i+1} - q_{i+2}}{12\Delta x^2} + \mathcal{O}(\Delta x^4).
\]

For the variable coefficient diffusion term $\nabla \cdot(h\nabla H)$, we take a compact fourth order central difference discretization as developed in \cite{boscarino2019high}, which is denoted as $\nabla_{C^2} \cdot(h\nabla H)$. Taking $(a(x,y)q_x)_x$ at the grid point $(x_i,y_j)$ as an example, it is approximated by
\begin{equation*}
  (a(x)q_x)_x|_{(x_i, y_j)}
  =\frac{1}{\Delta x^2}
  {\bf a}_{i,j}
  \begin{pmatrix}
  -25/144 & 1/3 & -1/4 & 1/9 & -1/48\\
    1/6   & 5/9 &  -1  & 1/3 & -1/18\\
     0    &  0  &   0  &  0  &   0  \\
   -1/18  & 1/3 &  -1  & 5/9 &  1/6 \\
   -1/48  & 1/9 & -1/4 & 1/3 &-25/144
  \end{pmatrix}
  {\bf q}^T_{i,j}
  +\mathcal{O} (\Delta x^4),
\end{equation*}
with the two vectors being
\begin{equation*}
  {\bf a}_{i,j}=(a_{i-2,j},a_{i-1,j},a_{i,j},a_{i+1,j},a_{i+2,j}), \quad
  {\bf q}_{i,j}=(q_{i-2,j},q_{i-1,j},q_{i,j},q_{i+1,j},q_{i+2,j}).
\end{equation*}

For high order spatial discretization, as in \eqref{presplit}, we rewrite the term $h\nabla H_2$ and discretize it as
\beq
\nabla_W\big(\bar H_E H_{I,2}+\frac12\eps^2 (H_{I,2})^2-H_{I,2}b\big)+H_{I,2}\nabla_W b,
\eeq
here $\nabla_W$ in the first term is a high order finite difference WENO reconstruction, but with zero viscosity as studied in \cite{boscarino2019high}. We use the same $\nabla_W$ with exactly the same nonlinear weights to evaluate $\nabla_W b$, for the purpose of preserving the exact still water equilibrium. We refer to \cite{xing2005high} for more detailed discussion of this matter.

With the above space and time discretizations, we now summarize our high order semi-implicit scheme as follows:
\bit
\item First for the stage values from $i=1,\cdots, s$:
\begin{enumerate}
\item update $h_E^{(i)}$ and $(h\bu)_E^{(i)}$ from
\begin{subequations}
	\label{S3_UE2}
	\begin{equation}
	h_E^{(i)} = h^n - \Delta t\sum_{j=1}^{i-1}\tilde{a}_{ij}\nabla_{LF}\cdot (h\bu)_I^{(j)},
	\end{equation}
	\begin{equation}
	(h\bu)_E^{(i)} =(h\bu)^n - \Delta t\sum_{j=1}^{i-1}\tilde{a}_{ij}\left(\nabla_{LF} \cdot \left(\frac{h\bu\otimes h\bu}{h}\right)_E^{(j)}+{\nabla_W\Big(\bar H^{(j)}_E H^{(j)}_{I,2}+\frac12\eps^2 (H^{(j)}_{I,2})^2-H^{(j)}_{I,2}b\Big)+H^{(j)}_{I,2}\nabla_W b}\right).
	\end{equation}
\end{subequations}
\item precompute the known values of $h_*^{(i)}$ and $(h\bu)_*^{(i)}$
\begin{subequations}
	\label{S3_Ustar2}
	\begin{equation}
	h_*^{(i)} = h^n - \Delta t\sum_{j=1}^{i-1}a_{ij}\nabla_{LF}\cdot (h\bu)_I^{(j)},
	\end{equation}
	\begin{equation}
	(h\bu)_*^{(i)} =(h\bu)^n - \Delta t\sum_{j=1}^{i-1}a_{ij}\left(\nabla_{LF} \cdot \left(\frac{h\bu\otimes h\bu}{h}\right)_E^{(j)}+{\nabla_W\Big(\bar H^{(j)}_E H^{(j)}_{I,2}+\frac12\eps^2 (H^{(j)}_{I,2})^2-H^{(j)}_{I,2}b\Big)+H^{(j)}_{I,2}\nabla_W b}\right).
	\end{equation}
\end{subequations}
\item solve the linear elliptic equation to obtain $H^{(i)}_{I,2}$
\begin{subequations}
\label{S3_HI2}
\begin{equation}
\label{S3_HI21}
\eps^2 H^{(i)}_{I,2} -(a_{ii}\Delta t)^2\nabla_{C^2}\cdot\Big(h^{(i)}_E\nabla H^{(i)}_{I,2}\Big) = h^{**},
\end{equation}
\begin{equation}
\label{S3_HI22}
h^{**}=h^{(i)}_*+b-\bar{H}^{(i)}_E-a_{ii}\Delta t\left(\nabla_{LF} \cdot (h\bu)^{(i)}_* -a_{ii}\Delta t \nabla ^2_C:\left(\frac{h\bu\otimes h\bu}{h}\right)^{(i)}_E\right).
\end{equation}
\end{subequations}
\item update $h_I^{(i)}$ and $(h\bu)_I^{(i)}$ from
\begin{subequations}
	\label{S3_UI2}
	\begin{equation}
	\label{S3_UI21}
	h_I^{(i)} = h_*^{(i)} - {a}_{ii}\Delta t\nabla_{LF}\cdot (h\bu)_I^{(i)} ,
	\end{equation}
	\begin{equation}
	\label{S3_UI22}
	(h\bu)_I^{(i)} =(h\bu)_*^{(i)} -{a}_{ii}\Delta t\left(\nabla_{LF} \cdot \left(\frac{h\bu\otimes h\bu}{h}\right)_E^{(i)}+{\nabla_W\Big(\bar H^{(i)}_E H^{(i)}_{I,2}+\frac12\eps^2 (H^{(i)}_{I,2})^2-H^{(i)}_{I,2}b\Big)+H^{(i)}_{I,2}\nabla_W b}\right).
	\end{equation}
\end{subequations}
\end{enumerate}
\item Update the solution at the time level $t^{n+1}$:
\begin{subequations}
	\label{S3_Unp1}
	\begin{equation}
	h^{n+1} = h^n - \Delta t\sum_{i=1}^{s}b_i\nabla_{LF}\cdot (h\bu)_I^{(i)},
	\end{equation}
	\begin{equation}
	(h\bu)^{n+1} =(h\bu)^n - \Delta t\sum_{i=1}^{s}b_{i}\left(\nabla_{LF} \cdot \left(\frac{h\bu\otimes h\bu}{h}\right)_E^{(i)}+{\nabla_W\Big(\bar H^{(i)}_E H^{(i)}_{I,2}+\frac12\eps^2 (H^{(i)}_{I,2})^2-H^{(i)}_{I,2}b\Big)+H^{(i)}_{I,2}\nabla_W b}\right).
	\end{equation}
\end{subequations}
\eit

\subsection{Well-balanced property for high order semi-implicit scheme}
Here we show that high order semi-implicit scheme \eqref{S3_UE2}-\eqref{S3_Unp1} can maintain the well-balanced property for
the still water equilibrium \eqref{still}. We have the following theorem:
\begin{thm}
	The high order semi-implicit scheme \eqref{S3_UE2}-\eqref{S3_Unp1} is well-balanced for the still water equilibrium, in the sence that, if initially
	the water is at still, namely $H^0=$ Const. and $(h\bu)^0 = {\bf 0}$, the scheme
	can maintain still water at any later time with $H^n=$ Const. and $(h\bu)^n={\bf 0}$.
\end{thm}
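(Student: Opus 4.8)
The plan is to prove the statement by induction on the time level $n$, with an inner strong induction over the Runge--Kutta stages $i=1,\dots,s$. Assume the outer hypothesis $H^n=h^n+b\equiv\text{Const.}$ and $(h\bu)^n={\bf 0}$ (the base case $n=0$ is the hypothesis of the theorem). The stage-$i$ claim to be propagated is
\[
h_E^{(i)}=h_*^{(i)}=h^n,\qquad (h\bu)_E^{(i)}=(h\bu)_*^{(i)}={\bf 0},\qquad H_{I,2}^{(i)}\equiv 0,\qquad h_I^{(i)}=h^n,\qquad (h\bu)_I^{(i)}={\bf 0},
\]
which in particular gives $H_E^{(i)}=H_I^{(i)}=H^n$ and $\bar H_E^{(i)}=H^n$. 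Once this holds for $i=1,\dots,s$, the accumulation step \eqref{S3_Unp1} yields $h^{n+1}=h^n$ and $(h\bu)^{n+1}={\bf 0}$, so $H^{n+1}=H^n$ is again constant and the outer induction closes.

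For the inner step fix $i$ and assume the stage claim for all $j<i$ (vacuous for $i=1$). The decisive structural fact is that the Lax--Friedrichs flux \eqref{S3_LF} carries its numerical viscosity in the surface level $H$, not in $h$; since each $H_I^{(j)}$ is constant and each $(h\bu)_I^{(j)}={\bf 0}$, we get $\nabla_{LF}\cdot(h\bu)_I^{(j)}=0$ for all $j<i$. Plugging this into \eqref{S3_UE2} and \eqref{S3_Ustar2} gives $h_E^{(i)}=h_*^{(i)}=h^n$, hence $H_E^{(i)}=H_*^{(i)}=H^n$ and $\bar H_E^{(i)}=H^n$. Moreover every term in the momentum updates for $(h\bu)_E^{(i)}$ and $(h\bu)_*^{(i)}$ is either quadratic in $\{(h\bu)_E^{(j)}\}_{j<i}$ (the convective flux built from the splitting \eqref{splitflux2}, and the central difference $\nabla^2_C{:}(h\bu\otimes h\bu/h)$) or proportional to $H_{I,2}^{(j)}$ (the two pieces coming from the decomposition \eqref{presplit}); both vanish by the stage-$j$ hypothesis, so $(h\bu)_E^{(i)}=(h\bu)_*^{(i)}={\bf 0}$, and consequently $\nabla_{LF}\cdot(h\bu)_*^{(i)}=0$ and $\nabla^2_C{:}(h\bu\otimes h\bu/h)_E^{(i)}=0$.

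Feeding these into the right-hand side \eqref{S3_HI22} of the elliptic solve yields
\[
h^{**}=\big(h_*^{(i)}+b\big)-\bar H_E^{(i)}-a_{ii}\Delta t\Big(\nabla_{LF}\cdot(h\bu)_*^{(i)}-a_{ii}\Delta t\,\nabla^2_C{:}(h\bu\otimes h\bu/h)_E^{(i)}\Big)=H^n-H^n=0,
\]
so \eqref{S3_HI21} reduces to the homogeneous equation $\eps^2 H_{I,2}^{(i)}-(a_{ii}\Delta t)^2\nabla_{C^2}\cdot\big(h_E^{(i)}\nabla H_{I,2}^{(i)}\big)=0$. Since $h_E^{(i)}=h^n>0$, the matrix of this linear system is symmetric positive definite (for $\eps>0$), hence $H_{I,2}^{(i)}\equiv 0$ is its unique solution; if $\eps=0$ the discrete Helmholtz operator degenerates to a discrete Laplacian whose kernel consists of the constants under periodic/no-slip conditions, but a constant $H_{I,2}^{(i)}$ still makes the momentum source below vanish, so the conclusion is unchanged. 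With $H_{I,2}^{(i)}=0$, the source term $\nabla_W\big(\bar H_E^{(i)}H_{I,2}^{(i)}+\tfrac12\eps^2(H_{I,2}^{(i)})^2-H_{I,2}^{(i)}b\big)+H_{I,2}^{(i)}\nabla_W b$ in \eqref{S3_UI22} is identically zero and the convective term vanishes as before, so $(h\bu)_I^{(i)}=(h\bu)_*^{(i)}={\bf 0}$; then $h_I^{(i)}=\bar H_E^{(i)}-b+\eps^2 H_{I,2}^{(i)}=h^n$ (equivalently, the Lax--Friedrichs divergence in \eqref{S3_UI21} of zero momentum and constant surface vanishes), so $H_I^{(i)}=H^n$. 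This establishes the stage-$i$ claim and completes both inductions.

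I expect the crux to be the collapse of $h^{**}$ to exactly $H^n-\bar H_E^{(i)}=0$: this is where the design choices must cooperate --- the pre-balanced form \eqref{S3_H} of the continuity equation, the use of $H$ (not $h$) in the Lax--Friedrichs viscosity so that $\nabla_{LF}\cdot(h\bu)$ vanishes for constant surface and zero momentum, and the split \eqref{presplit} discretized with one and the same WENO operator $\nabla_W$ (identical nonlinear weights) applied both to $b$ and to $\bar H_E H_{I,2}-H_{I,2}b+\tfrac12\eps^2 H_{I,2}^2$ --- after which positive definiteness of the discrete Helmholtz operator forces $H_{I,2}^{(i)}\equiv0$. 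The secondary care point is that the recovery of $h_I^{(i)}$ (and of $h^{n+1}$) be consistent between the relation $h_I^{(i)}=\bar H_E^{(i)}-b+\eps^2 H_{I,2}^{(i)}$ used to derive \eqref{S3_HI21} and the mass update \eqref{S3_UI21}; once both $h^{**}=0$ and $H_{I,2}^{(i)}=0$ are in hand, everything else is a mechanical propagation of ``momentum $\equiv{\bf 0}$, surface $\equiv$ const'' through the stage formulas, valid for any number of stages.
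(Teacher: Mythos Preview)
Your proposal is correct and follows essentially the same approach as the paper: an outer induction on the time level $n$ combined with an inner induction over the Runge--Kutta stages, showing at each stage that $h_E^{(i)}=h_*^{(i)}=h^n$, $(h\bu)_E^{(i)}=(h\bu)_*^{(i)}={\bf 0}$, hence $h^{**}=0$, so that the positive definite elliptic solve forces $H_{I,2}^{(i)}=0$ and the implicit stage values collapse back to the equilibrium. Your write-up is slightly more detailed than the paper's (in particular the remark on the $\eps=0$ kernel and the explicit identification of which design choices make $h^{**}$ vanish), but the logical skeleton is identical.
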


\begin{proof}

We prove this theorem using the mathematical induction. Assume at the time step $t^n$, we have $H^n=$ Const. and $\bu^n={\bf 0}$. First for $U^{(0)}_E=U^{(0)}_I=U^n$, similar to the discussion in the first order semi-implicit scheme, we have
\begin{equation}
\nabla_{LF}\cdot (h\bu)_I^n = 0, \quad \nabla_{LF} \cdot \left(\frac{h\bu\otimes h\bu}{h}\right)_E^n = 0, \quad {H^n_{I,2} = 0}, \quad \nabla ^2_C:\left(\frac{h\bu\otimes h\bu}{h}\right)^n = 0.
\end{equation}
By the induction hypothesis, we assume $H^{(j)}_E=H^{(j)}_I=$ Const. and $(h\bu)^{(j)}_E=(h\bu)^{(j)}_I={\bf 0}$ hold for any $j\le i-1$, from which we have
for $j=1,\cdots,i-1$:
\begin{equation}
\label{flux0}
\nabla_{LF}\cdot (h\bu)_I^{(j)} = 0, \quad \nabla_{LF} \cdot \left(\frac{h\bu\otimes h\bu}{h}\right)_E^{(j)} = 0, \quad {H^{(j)}_{I,2} = 0},\quad \nabla ^2_C:\left(\frac{h\bu\otimes h\bu}{h}\right)^{(j)}_E = 0.
\end{equation}
The goal is to show that $H^{(i)}_E=H^{(i)}_I=$ Const. and $(h\bu)^{(i)}_E=(h\bu)^{(i)}_I={\bf 0}$. From \eqref{S3_UE2}, we have $h^{(i)}_E=h^n$ so that $H^{(i)}_E=h^n+b=$ Const., and $(h\bu)^{(i)}_E=(h\bu)^n={\bf 0}$. Similarly $h_*^{(i)}+b=h^n+b=$ Const., and $(h\bu)^{(i)}_*=(h\bu)^n={\bf 0}$ from \eqref{S3_Ustar2}. With these, we conclude that $h^{**}=0$ from \eqref{S3_HI22}, and solving the elliptic equation \eqref{S3_HI21} with a positive definite matrix leads to $H^{(i)}_{I,2}=0$. Furthermore, due to $(h\bu)^{(i)}_E={\bf 0}$ and $H^{(i)}_{I,2}=0$, we get $(h\bu)^{(i)}_I=(h\bu)^{(i)}_*={\bf 0}$ from \eqref{S3_UI22}. It follows that $h^{(i)}_I=h^{(i)}_*$ from \eqref{S3_UI21}, so that $H^{(i)}_I=h^{(i)}_*+b=$ Const.,
and we complete the mathematical induction.

Since \eqref{flux0} holds for $j=1,\cdots,s$, substituting them into \eqref{S3_Unp1}, we obtain $H^{n+1}=H^n=$ Const. and $(h\bu)^{n+1}=(h\bu)^n={\bf 0}$. Therefore, the well-balanced property is preserved and this finishes the proof.
\end{proof}

\subsection{Asymptotic preserving and asymptotically accurate properties}

In this section, we formally prove the AP property for the first order semi-implicit scheme \eqref{S3_hu}, and the AA property for the high order semi-implicit scheme \eqref{S3_UE}-\eqref{S3_Un1}.
When we discuss the AP or AA property, we focus on the time discretization while keeping the space continuous. First we have the following theorem.
\begin{thm}
	The first order semi-implicit scheme \eqref{S3_hu} with space continuous is asymptotic preserving, in the sense that, with no-slip or periodic boundary condition, at the leading order asymptotic expansions, the scheme \eqref{S3_hu} is a consistent approximation of the lake equations \eqref{lake} at the zero Froude number limit.
\end{thm}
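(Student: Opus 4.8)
The plan is to substitute the asymptotic expansions \eqref{S2_E2}--\eqref{S2_H} directly into the semi-implicit scheme \eqref{S3_hu} and track the leading-order behavior of each term as $\eps\to 0$, exactly mirroring the formal asymptotic analysis of Section~\ref{sec2} but now at the discrete-in-time level. Concretely, I would assume the discrete solution admits an expansion $h^n = h_0^n + \eps h_1^n + \eps^2 h_2^n + \cdots$, $(h\bu)^n = (h\bu)_0^n + \eps (h\bu)_1^n + \eps^2(h\bu)_2^n + \cdots$, and similarly for the stage quantities $H_2^{n+1}$, and then match powers of $\eps$. The key structural observation is that $H_2^{n+1}$ was introduced in \eqref{H2} precisely as $(H-\bar H)/\eps^2$, so the term $\eps^2 H_2^{n+1}$ in the elliptic equation \eqref{S3_H2} is $\mathcal{O}(\eps^2)$ by construction; this forces $H_2^{n+1}$ to stay $\mathcal{O}(1)$ and converge to the limiting $h_2$.

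\textbf{Key steps, in order.} First, I would examine the elliptic equation \eqref{S3_H2}. As $\eps\to 0$, the term $\eps^2 H_2^{n+1}$ drops out, leaving $-\Delta t^2 \nabla\cdot(h^n\nabla H_2^{n+1}) = h^*$ at leading order; crucially, one checks that $h^* = H^n - \bar H^n - \Delta t(\cdots)$ is itself $\mathcal{O}(\eps^2)$ divided appropriately, i.e. after the rescaling the leading balance is consistent, so that $H_2^{n+1,0}$ (the leading coefficient) is well-defined and finite. Second, from the momentum update (second equation of \eqref{S3_hu}), the term $h^n\nabla H_2^{n+1}$ is $\mathcal{O}(1)$ -- this is the discrete analogue of $h_0\nabla h_2$ in \eqref{S2_E6b} -- so collecting $\mathcal{O}(1)$ terms gives
\[
\frac{(h\bu)_0^{n+1} - (h\bu)_0^n}{\Delta t} + \nabla\cdot\Big(\frac{h\bu\otimes h\bu}{h}\Big)_0^n + h_0^n\nabla H_{2,0}^{n+1} = 0,
\]
a consistent first-order-in-time discretization of the second equation of \eqref{lake}. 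Third, from the mass update (first equation of \eqref{S3_hu}, or equivalently the pre-balanced form \eqref{S3_H}), at $\mathcal{O}(\eps^{-2})$ and $\mathcal{O}(\eps^{-1})$ one recovers that $h_0^n + b$ and $h_1^n$ are constant in space (using the no-slip/periodic boundary condition and integrating over $\Omega$, as in \eqref{S2_E7a}), and at $\mathcal{O}(1)$ one gets $\nabla\cdot(h_0\bu_0) = 0$ together with the constancy in time of $H_0$, yielding the divergence-free constraint $\nabla\cdot(h_0^n\bu_0^n) = 0$ in \eqref{lake}. Fourth, I would verify the consistency of the spatial-average bookkeeping: since $\bar H^n$ is the spatial average of $H^n$ and $H_0$ is spatially constant, $\bar H^n = H_0 + \mathcal{O}(\eps)$, so the decomposition \eqref{S3_Hbar} is consistent with the expansion \eqref{S2_H}.

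\textbf{Main obstacle.} The delicate point is verifying that the right-hand side $h^*$ in \eqref{S3_H2} is genuinely $\mathcal{O}(\eps^2)$-consistent so that dividing through yields a well-posed leading-order elliptic problem for $H_{2,0}^{n+1}$ rather than an over- or under-determined one; this requires carefully using the induction hypothesis that at time $t^n$ the solution already satisfies $h_0^n + b = $ Const.\ and $\nabla\cdot(h_0^n\bu_0^n)=0$ (so that $H^n - \bar H^n = \mathcal{O}(\eps^2)$ and $\nabla_{LF}\cdot(h\bu)^n$ contributes at the right order), together with the fact that the well-balanced Lax--Friedrichs discretization $\nabla_{LF}$ introduces numerical viscosity proportional to $\min(1,1/\eps)\sqrt{h}$ acting on $H$, not on $h$, so the viscous contribution to $h^*$ is $\mathcal{O}(\eps^2)$ as well. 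I would also need to check that solving \eqref{S3_H2} with $\eps>0$ gives a solution that depends regularly on $\eps$ (which follows from the positive definiteness of the discrete elliptic operator, already noted in the text), so the formal expansion is justified. The remaining matching of powers of $\eps$ in the mass and momentum updates is then routine and parallels Section~\ref{sec2} line by line.
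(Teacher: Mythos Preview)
Your overall strategy---insert the asymptotic expansion, match powers of $\eps$, and read off the limiting scheme---is the same as the paper's. However, the paper's proof is far more direct, and two aspects of your proposal are either unnecessary or confused.

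First, the theorem is stated \emph{with space continuous}. Your ``main obstacle'' paragraph spends effort on the Lax--Friedrichs viscosity coefficient $\min(1,1/\eps)\sqrt{h}$ acting on $H$ versus $h$; none of this is relevant here. The spatial derivatives in \eqref{S3_hu} are exact, so there is no numerical viscosity to worry about.

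Second, and more substantively, your step~3 says that from the mass update ``at $\mathcal{O}(\eps^{-2})$ and $\mathcal{O}(\eps^{-1})$ one recovers that $h_0^n+b$ and $h_1^n$ are constant in space.'' But \eqref{S3_hu} contains no negative powers of $\eps$: the whole point of introducing $H_2$ via \eqref{H2} is that the $1/\eps^2$ factor has been absorbed. You cannot recover $h_0^n+b=\text{Const.}$ from \eqref{S3_hu} or \eqref{S3_H} by matching $\mathcal{O}(\eps^{-2})$ terms. The paper does not attempt this; it simply \emph{assumes} the well-prepared structure $h^n=h_0^n+\eps^2 H_2^n$ with $h_0^n+b=H_0$ constant (and independent of $n$) as the standing hypothesis, consistent with \eqref{wellp}. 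With that assumption, $\bar H^n=H_0$ follows immediately, and the $\mathcal{O}(1)$ matching is a one-line calculation.

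Third, the paper replaces the first equation of \eqref{S3_hu} by the equivalent conservative form $\frac{h^{n+1}-h^n}{\Delta t}+\nabla\cdot(h\bu)^{n+1}=0$ (the first equation of \eqref{S3_E1}), from which $\nabla\cdot(h_0\bu_0)^{n+1}=0$ drops out directly at $\mathcal{O}(1)$. Your route through the elliptic equation \eqref{S3_H2} and the analysis of $h^*$ would also work, but it is a detour: once you assume well-prepared data and use the conservative mass update, the divergence-free constraint and the momentum limit \eqref{S3_1stap} are immediate, and $H_2^{n+1}$ is then characterized a posteriori by the Poisson problem obtained from them.
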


\begin{proof}
To prove the theorem, we assume the following expansions of the solutions at all time levels, i.e., $h^n(\bx):=h(\bx,t^n)$ and $\bu^n(\bx):=\bu(\bx,t^n)$ admit
\begin{equation}
h^n(\bx)=h^n_0(\bx)+\eps^2 H^n_2(\bx), \quad \bu^n(\bx)=\bu^n_0(\bx)+\eps \bu^n_1(\bx),
\end{equation}
and correspondingly the water surface level $H^n(\bx):=H(\bx,t^n)=h(\bx,t^n)+b(\bx)$ takes the form
\begin{equation}
H^n(\bx)=h^n_0(\bx)+b(\bx)+\eps^2 H^n_2(\bx),
\end{equation}
where $H_0=h^n_0(\bx)+b(\bx)=$ Const., namely $h^n_0(\bx)$ does not depend on $n$.

We plug them into the semi-discrete scheme \eqref{S3_hu}, with the first equation equivalent to the first equation of \eqref{S3_E1}. From $H^{n+1}=\bar{H}^n+\eps^2 H^{n+1}_2$, it yields $\bar{H}^n=H_0=$ Const.
Equating to zero for the $\mathcal{O}(\eps^0)$ terms, we have
\begin{equation}
\label{S3_1stap}
\left\{
\begin{aligned}
&\nabla \cdot (h_0\bu_0)^{n+1}   = 0, \\
&\frac{(h_0\bu_0)^{n+1} -(h_0\bu_0)^n}{\Delta t} + \nabla \cdot\left(\frac{h_0\bu_0\otimes h_0\bu_0}{h_0}\right)^n  + h_0^{n}\nabla H^{n+1}_2 =0,
\end{aligned}
\right.
\end{equation}
which is a consistent discretization to the lake equations \eqref{lake}, with $H^{n+1}_2$ solved from
\begin{equation}
-\nabla\cdot(h_0^n\nabla H^{n+1}_2) =\nabla ^2:\left(\frac{h_0\bu_0\otimes h_0\bu_0}{h_0}\right)^n.
\end{equation}
\end{proof}

Now we are ready to present the AA property for the high order semi-implicit scheme \eqref{S3_UE}-\eqref{S3_Un1}, that is, the scheme maintains its temporal order of accuracy for the lake equations \eqref{lake} at the zero Froude limit when $\eps \rightarrow 0$ \cite{Lorenzo2005,boscarino2021high}. To have the AA property, it is crucial that the IMEX Runge-Kutta scheme \eqref{DBT} is stiffly accurate (SA), namely the implicit part satisfies $b^T={\bf e}^T_s A$, where ${\bf e}^T_s=(0,\cdots,0,1)$ \cite{boscarino2013implicit,boscarino2021high}.
Besides, the initial conditions $(h^0(\bx),h^0(\bx)\bu^0(\bx))$ need to be well-prepared, in the sense that
\begin{equation}
\label{wellp}
h^0(\bx)=h_0(\bx)+\eps^2 H^0_2(\bx), \quad \bu^0(\bx)=\bu_0(\bx)+\eps \bu_1(\bx), \quad\text{ and }\quad h_0(\bx)+b(\bx)=H_0=\text{Const}.
\end{equation}
We have the following theorem about the AA property of the high order semi-implicit scheme.
\begin{thm}
	For the high order semi-implicit scheme \eqref{S3_UE}-\eqref{S3_Un1} of temporal order $p$, when applied to the system \eqref{S2_E1} on a bounded domain with no-slip or periodic boundary condition, suppose the IMEX Runge-Kutta scheme \eqref{DBT} is stiffly accurate, and the initial conditions $(h^0(\bx),h^0(\bx)\bu^0(\bx))$ are well prepared \eqref{wellp}. Denoting by
	${\bf V}^1({\bf x};\eps)=(h^1(\bx;\eps),h^1(\bx;\eps)\bu^1(\bx;\eps))$ the numerical solution after one time step, we have
    \begin{equation}\label{Prop1}
    \lim_{\eps \to 0} h^1(\bx;\eps)+b(\bx)=H_0, \quad \lim_{\eps \to 0} \nabla \cdot \Big(h^1(\bx;\eps)\bu^1({\bx};\eps)\Big)=0.
    \end{equation}
    Furthermore, let $\bV_{lake}(\bx,t)=(h_{lake}(\bx,t),h_{lake}(\bx,t)\bu_{lake}(\bx,t))$ be the exact solution of the lake equations \eqref{lake} with initial conditions $(h^0(\bx),h^0(\bx)\bu^0(\bx))$, one has the one-step error estimate
    \begin{equation}
    \label{Prop2}
    \lim_{\eps\to 0} {\bf V}^1({\bf x};\eps)={\bf V}_{lake}({\bf x}, \Delta t)+\mathcal{O}(\Delta t^{p + 1}),
    \end{equation}
    i.e., the high order semi-implicit scheme is AA.
\end{thm}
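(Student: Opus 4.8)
The plan is to follow the two-step strategy standard for AA proofs of stiffly accurate IMEX schemes (as in \cite{boscarino2021high}): first establish that after a single time step the numerical solution lands (in the limit $\eps\to0$) on the ``well-prepared manifold'' characterized by $H+b=H_0$ constant and $\nabla\cdot(h\bu)=0$, which is exactly \eqref{Prop1}; then, using this together with stiff accuracy, show that the scheme restricted to that manifold reduces in the limit to the $s$-stage IMEX discretization of the lake equations \eqref{lake}, whose one-step error is $\mathcal{O}(\Delta t^{p+1})$ by the classical order conditions, giving \eqref{Prop2}. Throughout I keep space continuous, and I carry the formal expansions $h^{(i)}_E = h^{(i)}_{E,0}+\eps h^{(i)}_{E,1}+\eps^2 h^{(i)}_{E,2}+\cdots$, and similarly for $h^{(i)}_I$, $(h\bu)^{(i)}_E$, $(h\bu)^{(i)}_I$, and $H^{(i)}_{I,2}$, plugging them into \eqref{S3_UE2}--\eqref{S3_Unp1} and matching powers of $\eps$ stage by stage.

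First I would run an induction on the stage index $i$ to show that, in the $\eps\to0$ limit, for every stage one has $h^{(i)}_{E,0}+b = h^{(i)}_{I,0}+b = H_0$ (the same constant as in the well-prepared initial data), the leading velocity fields are divergence-compatible, and $H^{(i)}_{I,2}$ converges to a finite limit $\bar H^{(i)}_2$. The base case uses the well-preparedness \eqref{wellp}: at $\eps=0$ the data already satisfies $h_0+b=H_0$. For the inductive step I examine \eqref{S3_UE2}: since each previous-stage flux $\nabla_{LF}\cdot(h\bu)_I^{(j)}$ contributes at $\mathcal{O}(1)$ but, crucially, the surface-level updates only involve $h\bu$ (not the stiff term directly), the $\mathcal{O}(\eps^{-2})$ and $\mathcal{O}(\eps^{-1})$ balances in the elliptic equation \eqref{S3_HI21} force $\nabla(h^{(i)}_{E,0}+b)=0$ and hence $h^{(i)}_{E,0}+b = $ Const.; then an integration of the first equation of \eqref{S3_UE2} over $\Omega$ with the no-slip/periodic boundary condition kills the boundary flux and pins the constant to $H_0$. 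Dividing \eqref{S3_HI21}--\eqref{S3_HI22} by $\eps^2$ is not quite right because of the $\eps^2 H_{I,2}^{(i)}$ term; instead I look at the $\mathcal{O}(1)$ part of \eqref{S3_HI21}, which in the limit becomes the Poisson-type equation $-(a_{ii}\Delta t)^2\nabla\cdot(h^{(i)}_{E,0}\nabla \bar H^{(i)}_2)$ equals the $\mathcal{O}(1)$ part of $h^{**}$; positivity of $h^{(i)}_{E,0}$ and the elliptic solvability (with compatibility from the boundary condition) give a unique finite $\bar H^{(i)}_2$. Feeding this back into \eqref{S3_UI22} shows $(h\bu)^{(i)}_{I,0}$ is finite and that $\nabla\cdot(h\bu)^{(i)}_{I,0}$ matches the right divergence constraint; \eqref{S3_UI21} then propagates $h^{(i)}_{I,0}+b=H_0$. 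The final accumulation \eqref{S3_Unp1}, evaluated at $\eps=0$, inherits $H^{n+1}=H_0$ and $\nabla\cdot(h\bu)^{n+1}_0=0$, which is \eqref{Prop1} applied with $n=0$.

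For \eqref{Prop2} I would argue that, once all stage values live on the manifold $H+b=H_0$ in the limit, the elliptic equation \eqref{S3_HI21} in the limit plays exactly the role of the pressure/Lagrange-multiplier solve in a projection-type IMEX step for \eqref{lake}: the term $h^{(i)}_{E}\nabla H_{I,2}^{(i)}$ becomes $h^{(i)}_{E,0}\nabla \bar H^{(i)}_2$, enforcing $\nabla\cdot(h\bu)^{(i)}_{I,0}=0$ at each stage, and the remaining explicit convective updates are precisely the explicit tableau $(\tilde A,\tilde b)$ applied to $\nabla\cdot(h\bu\otimes h\bu/h)$. Because the IMEX pair has order $p$ and $\tilde c_i=c_i$ for $i\ge2$, the limiting scheme is an order-$p$ (index-2 DAE / incompressible-type) integrator for \eqref{lake}; stiff accuracy $b^T={\bf e}_s^TA$ guarantees the final output equals the last internal stage $U_I^{(s)}$, so no order reduction occurs and the classical one-step truncation error estimate $\mathcal{O}(\Delta t^{p+1})$ carries over verbatim. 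Comparing with $\bV_{lake}(\bx,\Delta t)$, whose Taylor expansion agrees with the limiting scheme to order $p$, yields \eqref{Prop2}.

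The main obstacle I anticipate is the stage-by-stage limit passage in the presence of the $\eps^2 H_{I,2}^{(i)}$ regularization in \eqref{S3_HI21}: one must show this term is genuinely negligible (i.e.\ $H_{I,2}^{(i)}$ stays bounded so $\eps^2 H_{I,2}^{(i)}\to0$) rather than assuming it, and simultaneously show that the modification from $h^{n+1}\nabla H^{n+1}$ to $h^n\nabla H^{n+1}$ (and its stage analogue $h_E^{(i)}\nabla H_{I,2}^{(i)}$) does not corrupt the limit — this is where the well-preparedness and the constancy of $h_0+b$ in space and time are essential, since they make $h_E^{(i)}$ and $h^n$ agree at leading order. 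A secondary technical point is verifying solvability/uniqueness of the limiting elliptic problems for $\bar H^{(i)}_2$ under the chosen boundary conditions (the constant is in the kernel, so one needs the usual zero-mean compatibility, which follows from integrating the divergence constraint against the boundary condition). I would treat the Runge-Kutta order bookkeeping as routine, citing \cite{boscarino2013implicit,boscarino2021high,Lorenzo2005}.
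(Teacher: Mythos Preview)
Your proposal is correct and follows essentially the same route the paper indicates: the paper does not write out a proof but simply states that ``the proof follows from the same structure as in \cite{boscarino2021high} by the mathematical induction, and is skipped here.'' Your stage-by-stage induction on the $\eps$-expansions, the use of the elliptic equation \eqref{S3_HI21} to extract a finite limiting $H^{(i)}_{I,2}$, and the appeal to stiff accuracy to avoid order reduction in the limiting lake-equation integrator are exactly the ingredients of that argument, so there is nothing to add beyond what you have outlined.
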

	The proof follows from the same structure as in \cite{boscarino2021high} by the mathematical induction, and is skipped here.

\section{Numerical tests}
\label{sec4}
\setcounter{equation}{0}
\setcounter{figure}{0}
\setcounter{table}{0}

In this section, we will perform some numerical tests with the Froude number ranging from $0$ to $\mO(1)$. The fifth order finite difference WENO reconstruction \cite{shu1998essentially,shu2009high,xing2005high} is used for the first order spatial derivatives, and the fourth order (compact) central difference discretizations for the second order derivatives. In time we employ a third order SA IMEX Runge-Kutta scheme SI-IMEX(4,4,3) from \cite{boscarino2021high}, with the double Butcher tableau given by
\begin{align} \label{IMEX1_(4,4,3)}
&\textrm{\bf Explicit :} \nonumber \\ \vspace{8mm}
&\begin{array}{c|cccc}
0 & 0 & 0 & 0 & 0 \\
\gamma & \gamma & 0 & 0 & 0\\
0.717933260754 & 1.243893189483& -0.525959928729 & 0 & 0\\
1 &   0.630412558153 & 0.786580740199 &  -0.416993298352& 0\\
\hline
0 & 0 & 1.208496649176& -0.644363170684 & \gamma
\end{array},\nonumber \\ \vspace{8mm}
&\textrm{\bf Implicit :}  \\ \vspace{8mm}
&\begin{array}{c|cccc}
\gamma&  \gamma & 0  & 0 & 0\\
\gamma & 0&  \gamma & 0 & 0\\
0.717933260754 &0 & 0.282066739245 & \gamma & 0\\
1 &0 & 1.208496649176& -0.644363170684 & \gamma\\
\hline
&0 & 1.208496649176& -0.644363170684 & \gamma
\end{array},\nonumber
\end{align}
where $\gamma = 0.435866521508$.

The time step is taken as
\[
\Delta t = \text{CFL}\,\Delta x/\Lambda, \qquad \Lambda = \max \{|\bu|+\min(1,1/\eps)\sqrt{h}\},
\]
and $\text{CFL}=0.2$ is used.
For the accuracy tests, the time step is modified to
$\Delta t = \text{CFL}\Delta x^{5/3}/\Lambda$, for better observation of spatial orders. $N$ or $N^2$ uniform gird points are used for 1D and 2D problems respectively, except otherwise specified.

For the Froude number $\eps$ of $\mO(1)$, {e.g. $\eps=\frac{1}{\sqrt{g}}$ and $g=9.812$ is the gravitational constant}, we will compare our results to reference solutions, which are produced by the fifth order well-balanced finite difference WENO scheme developed by Xing and Shu \cite{xing2005high}. We refer it as ``WB-Xing" in the following.

\subsection{One dimensional case}

\begin{exa} {\em
\label{exam1}
({\bf{Accuracy test}})
We first consider an example with smooth initial conditions and a non-zero bottom topology, which are given by \cite{xing2005high}
\begin{equation}
\label{Ex1_1}
h(x,0)  = 5+ \exp(\cos(2\pi x)),\quad
(hu)(x,0) = \sin(\cos(2\pi x)), \quad
b(x)    = \sin^2(\pi x),      \quad
x \in [0,1].
\end{equation}
Periodic boundary condition is used, with the Froude number $\eps=\frac{1}{\sqrt{g}}$.
We take mesh grid points $N=40\times2^i$ for $i=1,\cdots,5$. Since the exact solution is not available, numerical errors are computed by comparing the numerical solutions at two successive mesh grid points from refinement. For this case, the errors are computed for the momentum $hu$, at a final time $T=0.1$. The results are shown in Table \ref{T_eg_1}, we can see that it approaches fifth order,
which is similar to the results in \cite{xing2005high}.
\begin{table}[htbp]
\caption{Example~\ref{exam1}: numerical errors and orders of accuracy for the momentum $hu$. $T=0.1$.}
\begin{center}
\begin{tabular}{c|c|c|c|c|c}\hline
  N           &    80    &    160   &   320    &  640     &  1280     \\ \hline
  $L_1$ error & 3.35E-02 & 4.61E-03 & 4.44E-04 & 2.06E-05 & 6.99E-07 \\ \hline
   order      &    --    &   2.86   &   3.37   &   4.43   &   4.88   \\ \hline
\end{tabular}
\end{center}
\label{T_eg_1}
\end{table}
}
\end{exa}

\begin{exa}{\em
\label{exam2}
({\bf{Accuracy test for a range of $\eps$}})
In this example, we try to test the orders of accuracy for our scheme in different regimes of the Froude number. We take the non-flat bottom topological function  $b(x)$ as
$$
b(x) = 1+\sin(2\pi x),	
$$
with initial conditions
\begin{equation}
\label{ini2}
\begin{aligned}
&h(x,0)     = 10 - b(x) + \eps^2\exp(\sin(2\pi x)), \\
&(hu)(x,0)  = 1 + \eps^2\sin(2\pi x).
\end{aligned}
\end{equation}
Similarly, periodic boundary condition is used.
Numerical errors are computed in the same way as in the previous example.
Three different Froude numbers $\eps =1,10^{-2}, 10^{-4}$ are taken, with a final time $T=0.05$. And the computational domain is $x\in [0,2]$.
Numerical errors and orders are shown in Table {\ref{T_eg2}}.
From this table, we can see that for all $\eps$'s, almost fifth order accuracy can be observed,
which indicate that our scheme is asymptotically accurate.

\renewcommand{\multirowsetup}{\centering}
\begin{table}[htbp]
\caption{ Example~\ref{exam2}: numerical errors and orders of accuracy for the momentum $hu$  with initial conditions \eqref{ini2}. $T=0.05$.}
\begin{center}
\begin{tabular}{c||c|c|c|c|c}\hline\hline
  $\eps$  &      N       &     80   &   160    &   320    &   640                 \\ \hline\hline
  \multirow{2}{1cm}{1}
  &  $L^1$ error & 6.09E-03   & 3.23E-04 & 1.16E-05 & 4.05E-07     \\ \cline{2-6}
          &  order &    --    &   4.24   &   4.80   &   4.84          \\ \hline\hline
  \multirow{2}{1cm}{$10^{-2}$}
  &  $L^1$ error   & 8.27E-03 & 3.75E-04 & 2.70E-05 & 1.06E-06     \\ \cline{2-6}
          &  order &    --    &   4.46   &   3.79   &   4.67       \\ \hline\hline
  \multirow{2}{1cm}{$10^{-4}$}
  &  $L^1$ error   & 4.58E-05 & 4.92E-06 & 1.26E-06 & 5.58E-08     \\ \cline{2-6}
          &  order &    --    &   3.22   &   1.97   &   4.49           \\ \hline\hline

\end{tabular}
\end{center}
\label{T_eg2}
\end{table}	
}\end{exa}

\begin{exa}{\em
\label{exam3}
({\bf{A small perturbation of a steady-state water}})
This example was first proposed by LeVeque in \cite{leveque1998balancing} and later studied by Xing and Shu in \cite{xing2005high}. There is a small perturbation on a quasi-stationary water, moving over a non-flat bottom topography. The bottom function is smooth, which is given by
\begin{equation}
\label{Ex1}
b(x) =
\left\{
\begin{aligned}
& 0.25(\cos(10\pi(x-1.5))+1),   &  \text{if } 1.4\le x\le 1.6; \\
& 0,                            &  \text{otherwise},
\end{aligned}
\right.
\end{equation}
and the initial conditions are:
\begin{subequations}
	\label{ini_ex3}
\begin{equation}
		\label{Ex3_1}
		h(x,0)=
		\left\{
		\begin{aligned}
		&1-b(x)+\eta,       & \text{if }  1.1\le x \le 1.2;\\
		&1-b(x),            & \text{otherwise},
		\end{aligned}
		\right.
\end{equation}
\begin{equation}
\label{Ex3_2}
hu(x,0) = 0,
\end{equation}
\end{subequations}
on the domain $x\in [0,2]$, see Fig. \ref{Fg_eg3_1}. $\eta$ is the magnitude of perturbation. Two cases are considered: $\eta=0.2$ (big pulse) and $\eta=0.001$ (small pulse). The Froude number is taken to be $\eps = \frac{1}{\sqrt{g}}$. After the perturbation moves over the non-flat bottom, two disturbances will generate and one propagates to the left and the other to the right, both with a speed $\sqrt{gh}$.

This example is used to test the well-balanced property of the numerical scheme.
For non well-balanced schemes, numerical errors may pollute the small perturbations.
We show the water surface level $H$ and momentum $hu$, for $\eta=0.2$ and $\eta=0.001$ in Fig.~\ref{Fg_eg3_2} and Fig.~\ref{Fg_eg3_3} respectively, at a final time $T=0.2$ with $N=200$. We compare the solutions to the reference solutions of ``WB-Xing" with $N=3000$. It can be observed that, for both cases, our solutions can well capture the disturbances and match the reference solutions.

\begin{figure}[hbtp]
\begin{center}
	\mbox{\subfigure[$\eta=0.2$]
		{\includegraphics[width=7cm]{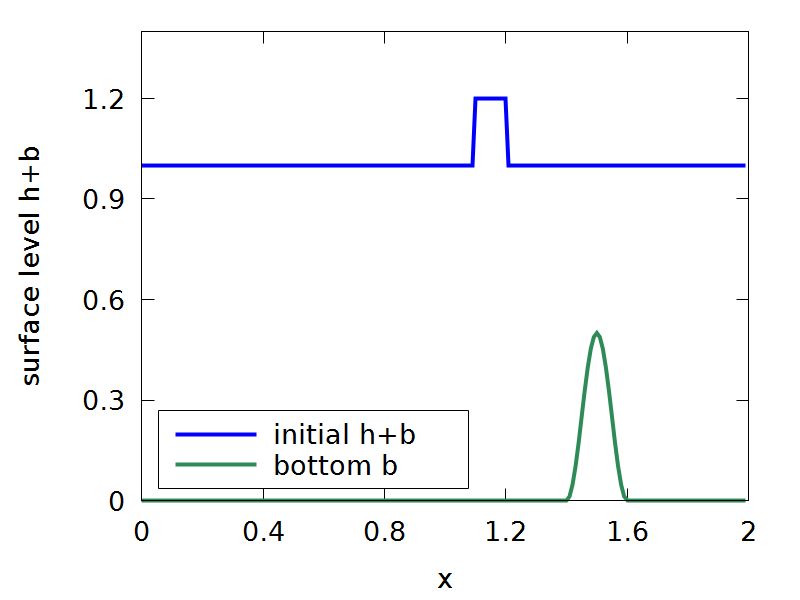}}\quad
		\subfigure[$\eta=0.001$]
		{\includegraphics[width=7cm]{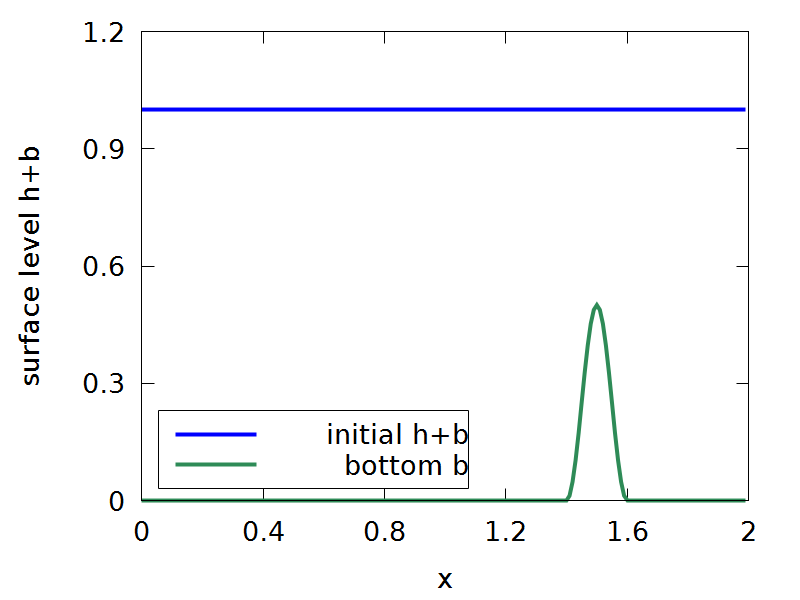}}
		}
\caption{ The initial water surface level $h+b$ \eqref{Ex3_1} and the bottom $b$ \eqref{Ex1} for Example~\ref{exam3}. Left: $\eta =0.2$; Right:$\eta=0.001$. }
\label{Fg_eg3_1}
\end{center}
\end{figure}
		
\begin{figure}[hbtp]
\begin{center}
\mbox{
	{\includegraphics[width=8cm]{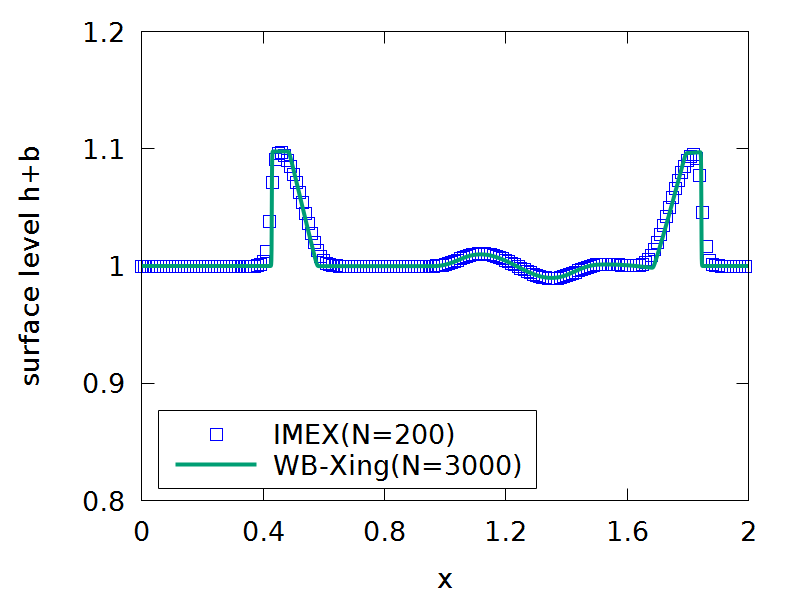}}\quad
					
	{\includegraphics[width=8cm]{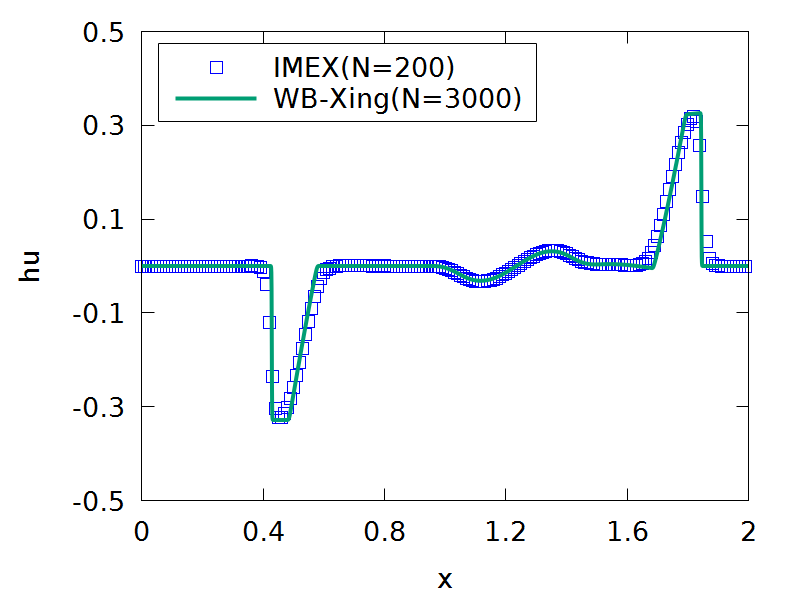}}
}
\caption{The water surface level $h+b$ (left) and momentum $hu$ (right) at time $T=0.2$ with $\eta=0.2$ for Example \ref{exam3}. }
\label{Fg_eg3_2}
\end{center}
\end{figure}
		
\begin{figure}[hbtp]
\begin{center}
\mbox{
	{\includegraphics[width=8cm]{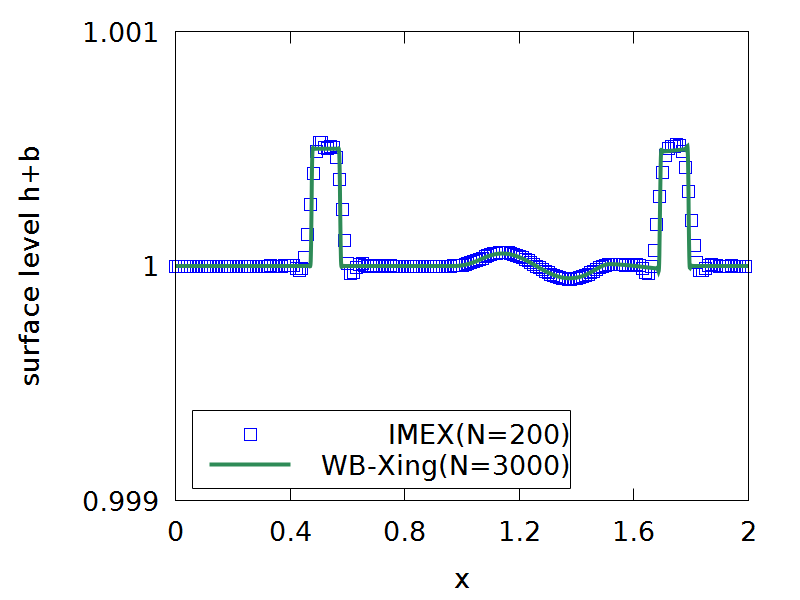}}\quad					
	{\includegraphics[width=8cm]{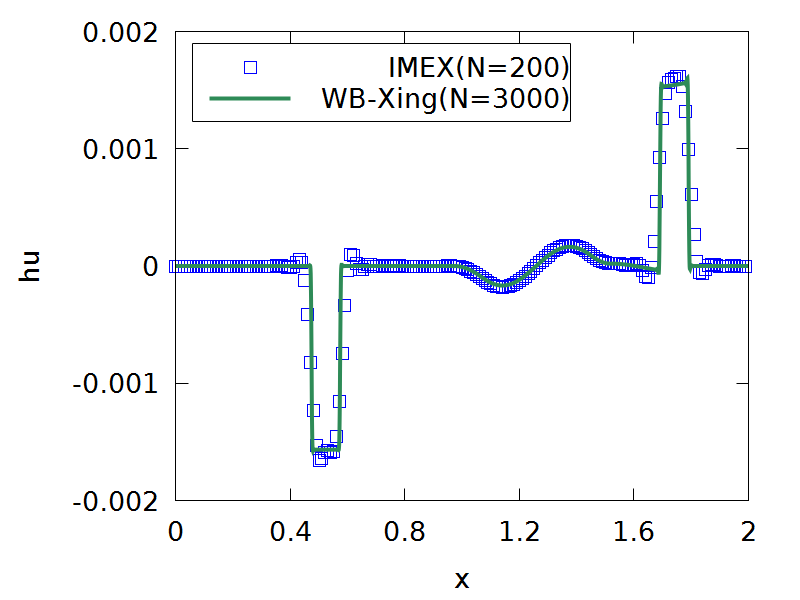}}
}
\caption{The water surface level $h+b$ (left) and momentum $hu$ (right) at time $T=0.2$ with $\eta=0.001$ for Example \ref{exam3}.  }
\label{Fg_eg3_3}
\end{center}
\end{figure}
}\end{exa}

\begin{exa}{\em
\label{exam4}
({\bf{Dam breaking}}) The dam breaking problem over a rectangular bump is widely used to test the oscillation-free property of numerical schemes for the shallow water equations \cite{vukovic2002eno}. The bottom function is defined as
\begin{equation}
		\label{Ex3}
		b(x) =
		\left\{
		\begin{aligned}
		&8,   & \text{if } |x-750| \le 1500/8;\\
		&0,   & \text{otherwise},
		\end{aligned}
		\right.
\end{equation}
and the initial conditions are:
\begin{equation}
		\label{Ex4}
		(hu)(x,0) = 0\quad
		\text{ and } \quad
		h(x,0)=
		\left\{
		\begin{aligned}
		&20-b(x),  & \text{if } x\le 750; \\
		&15-b(x),  & \text{otherwise},
		\end{aligned}
		\right.
\end{equation}
on a computational domain $x\in[0,1500]$. The Froude number is taken as $\eps =\frac{1}{\sqrt{g}}$. The inflow and outflow boundary conditions are set the same as the initial values on the left and right respectively.
As time evolves, the initial jump on $h$ will generate two waves.
One is a rarefaction wave traveling to the left, and the other is a shock traveling to the right. For this example, we show the water surface level $h+b$ on the mesh points $N=500$ at two different times $T=15$ and $60$ in Fig. {\ref{Fg_eg4_1}} and Fig. {\ref{Fg_eg4_2}}, respectively. We also compare them to the reference solutions on $N=3000$ with ``WB-Xing" method. The results match each other well.

		\begin{figure}[hbtp]
			\begin{center}
				\mbox{
					{\includegraphics[width=8cm]{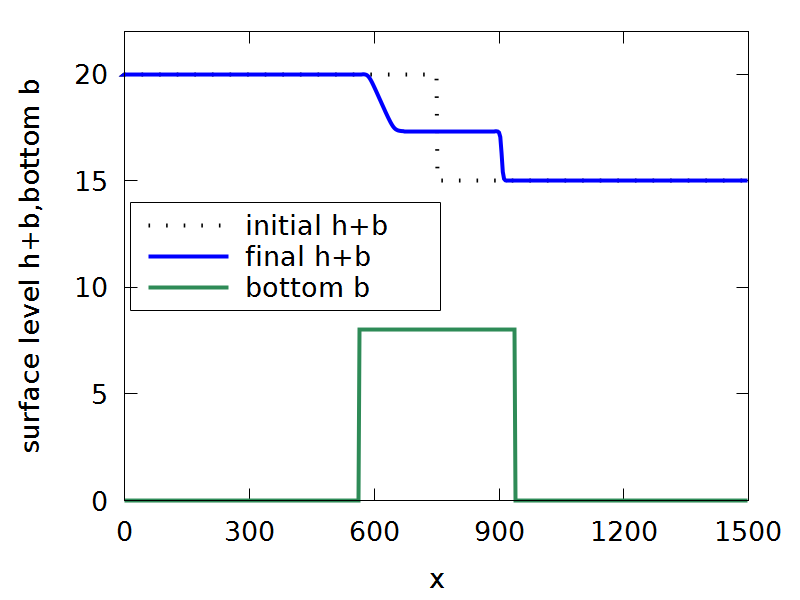}}\quad
					
					{\includegraphics[width=8cm]{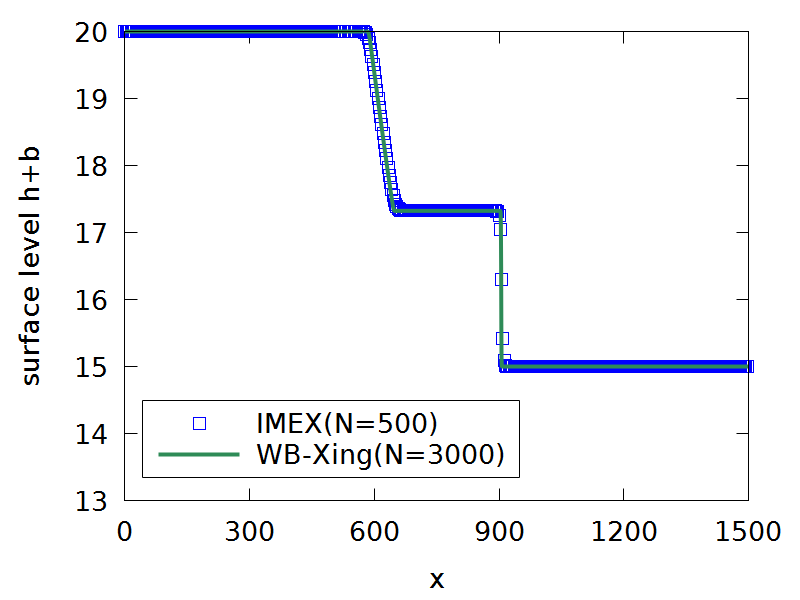}}
				}
				\caption{The water surface level for the dam breaking problem of Example \ref{exam4} at $T=15$. Left: initial surface level $h+b$, final surface level $h+b$ and bottom topology $b$; Right: comparisons to the reference solutions.}
				\label{Fg_eg4_1}
			\end{center}
		\end{figure}
		
		\begin{figure}[hbtp]
			\begin{center}
				\mbox{
					{\includegraphics[width=8cm]{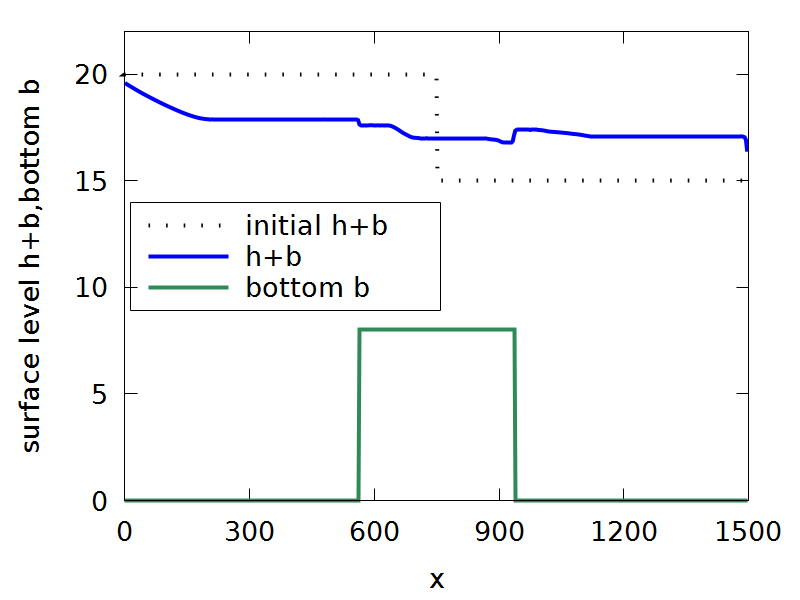}}\quad
					
					{\includegraphics[width=8cm]{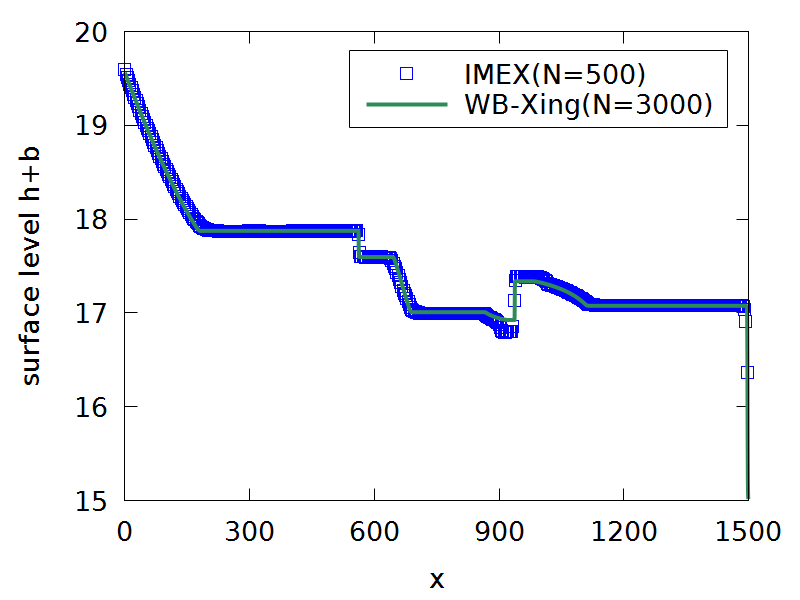}}
				}
				\caption{The water surface level for the dam breaking problem of Example \ref{exam4} at $T=60$. Left: initial surface level $h+b$, final surface level $h+b$ and bottom topology $b$; Right: comparison  to the reference solutions.}
				\label{Fg_eg4_2}
			\end{center}
		\end{figure}

}\end{exa}

\begin{exa}{\em
\label{exam5}
({\bf{Lake at rest}}) In this example,  we consider a still water initially to test the well-balanced property of our scheme.  We take a non-smooth bottom topology given by
%\begin{equation}
%		\label{Ex5}
%		b(x) = 5e^{-\frac{2}{5}(x-5)^2}
%\end{equation}
\begin{equation}
		\label{Ex6}
		b(x) =
		\left\{
		\begin{aligned}
		&4,  &\text{if } 4\le x \le 8; \\
		&0,  &\text{otherwise},
		\end{aligned}
		\right.
\end{equation}
and the initial conditions are
\begin{equation}
		\label{Ex7}
		(h+b)(x,0) = 10,\quad  (hu)(x,0) =0,
\end{equation}
on a computational domain $[0,10]$ with the Froude number set to be $\eps =\frac{1}{\sqrt{g}}$. Periodic boundary condition is adopted.

In Fig.~{\ref{Fg_eg5_2}}, {we show the variation of the water surface level, and the momentum at final time $T=10$.} We can clearly see the errors are within machine precision, namely, the still water equilibrium is well preserved.

To further show the ability of our scheme, we set the initial velocity as $u(x,0) =1$ for a moving water. Due to the non-flat bottom $b$, the initial water equilibrium will be destroyed. We compute the numerical solution to $T=0.1$. The results are shown in Fig.~{\ref{Fg_eg5_4}} and compared to the reference solutions from ``WB-Xing". We can see the results still match each other, and the discontinuities are well captured without any artificial oscillation.

%\begin{figure}[hbtp]
%	\begin{center}
%				\mbox{
%					{\includegraphics[width=8cm]{pic/eg4_1_1}}\quad
%					{\includegraphics[width=8cm]{pic/eg4_1_2}}
%				}
%				\caption{ The numerical solution for the lake at rest  with smooth bottom. Left :error of $h$ and $hu$; right: initial surface level $h+b$ and bottom topology $b$, the numerical result.}
%				\label{Fg_eg4_1}
%	\end{center}
%\end{figure}
		
		\begin{figure}[hbtp]
			\begin{center}
				\mbox{
					{\includegraphics[width=8cm]{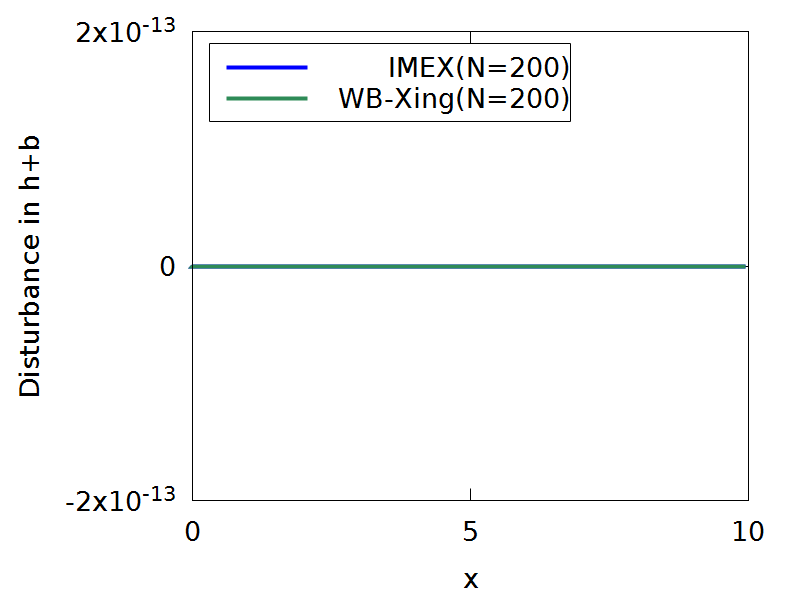}}\quad
					
					{\includegraphics[width=8cm]{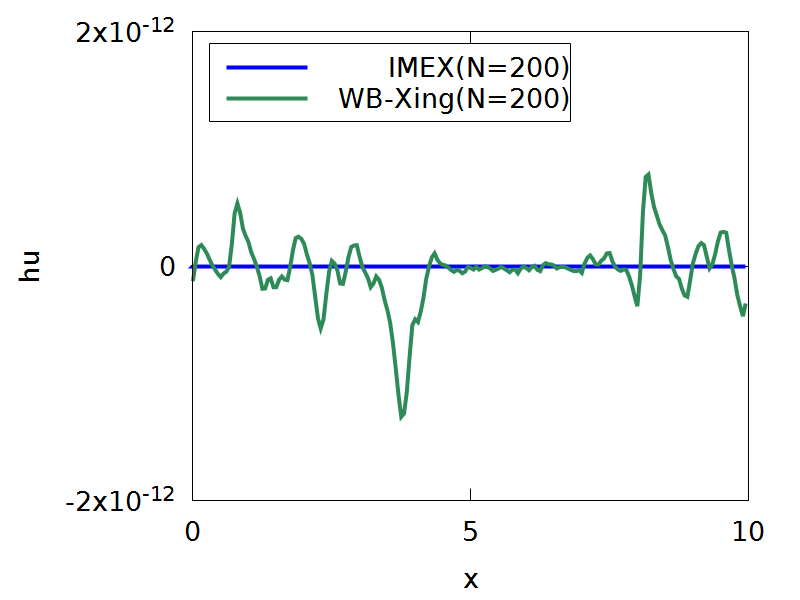}}
				}
				\caption{The lake at rest with non-smooth bottom for Example \ref{exam5} at $T=10$. Left: the disturbance of $H$; Right: the numerical result for momentum $hu$.}
				\label{Fg_eg5_2}
			\end{center}
		\end{figure}

		\begin{figure}[hbtp]
			\begin{center}
				\mbox{
					{\includegraphics[width=8cm]{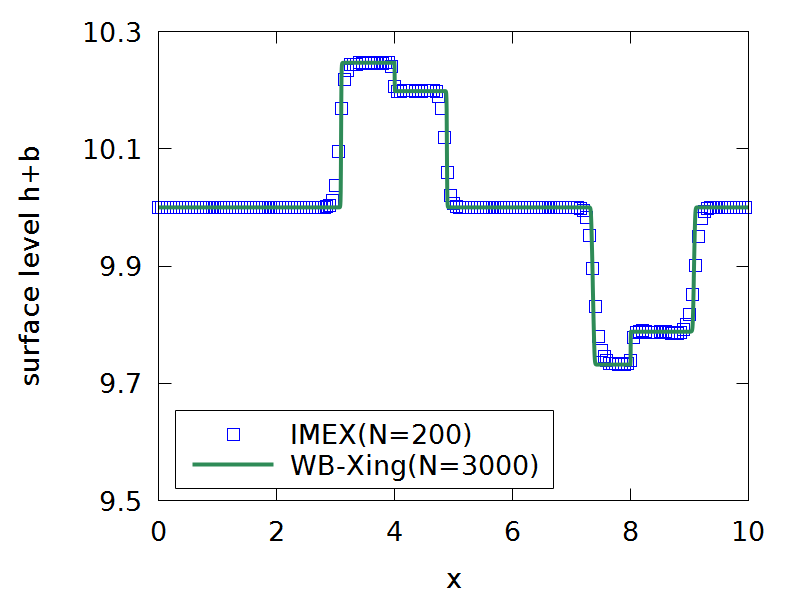}}\quad
					
					{\includegraphics[width=8cm]{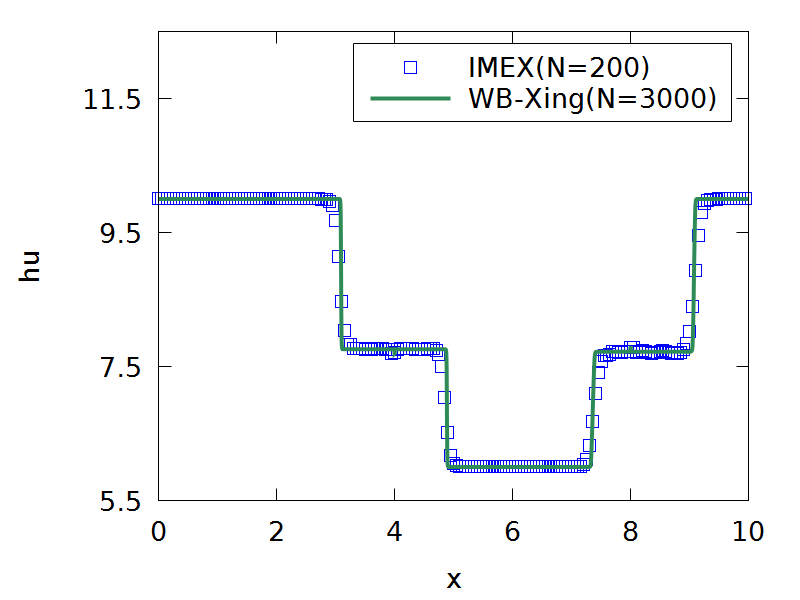}}
				}
				\caption{Moving water with non-smooth bottom for Example \ref{exam5} at $T=0.1$. Left: the water surface level $h+b$; Right: the momentum $hu$.}
				\label{Fg_eg5_4}
			\end{center}
		\end{figure}
		
}\end{exa}

\begin{exa}{\em
  \label{exam9}
In this example, we would like to test our scheme for a multiscale wave, which was  studied in \cite{klein1995semi, liu2020well}. The initial conditions are taken as
\begin{subequations}
	\label{Ex9}
           \begin{equation}
           \label{Ex9_1}
           H(x,0) = 1 + \frac{\sigma(x)}{2} \sin(\eps40\pi x) +
                    \eps(1+\cos(\eps\pi x)),
           \end{equation}
           \begin{equation}
           \label{Ex9_2}
           u(x,0) = \sqrt{2}(1+\cos(\eps\pi x)), \quad \quad b(x) = 0,
           \end{equation}
     \end{subequations}
  with the Froude number $\eps = 0.02$, and
           $$
           \sigma(x) = \left\{
           \begin{aligned}
		   &0.5(1-\cos(0.1\pi x)),  &\text{if } 0\le x \le 20; \\
		   &0,  &\text{otherwise}.
		   \end{aligned}
           \right.
           $$
The computational domain is $[-51,51]$ with periodic boundary condition.

%We take mesh grid points $N=1020$ and $N=2040$,
%\BL{We show the numerical solutions of the multiscale wave propagation in Fig.~{\ref{Fg_eg_91}} and Fig.~{\ref{Fg_eg_92}} with two uniform meshes of $N=1020$ and $N=2040$, respectively.
% %Fig.~{\ref{Fg_eg9_1}} and Fig.~{\ref{Fg_eg9_2}}.
%We compare our results to those produced by the explicit scheme of ``WB-Xing'' on the same mesh. We can see
%that our AP scheme can better preserve the amptitude of these waves than the explicit scheme, which is similar to the results in \cite{liu2020well}.}
{We show the numerical solutions of the multiscale wave propagation in Fig.~{\ref{Fg_eg_92}} on a uniform mesh of $N=2040$. We compare our results to those produced by the explicit scheme of ``WB-Xing'' on the same mesh. We can see
that under this mesh size, the results of our AP scheme match those from the explicit ``WB-Xing'' scheme very well.}
%           \begin{figure}[hbtp]
%			\begin{center}
%				\mbox{
%                     \subfigure[$t=0$]
%					{\includegraphics[width=8cm]{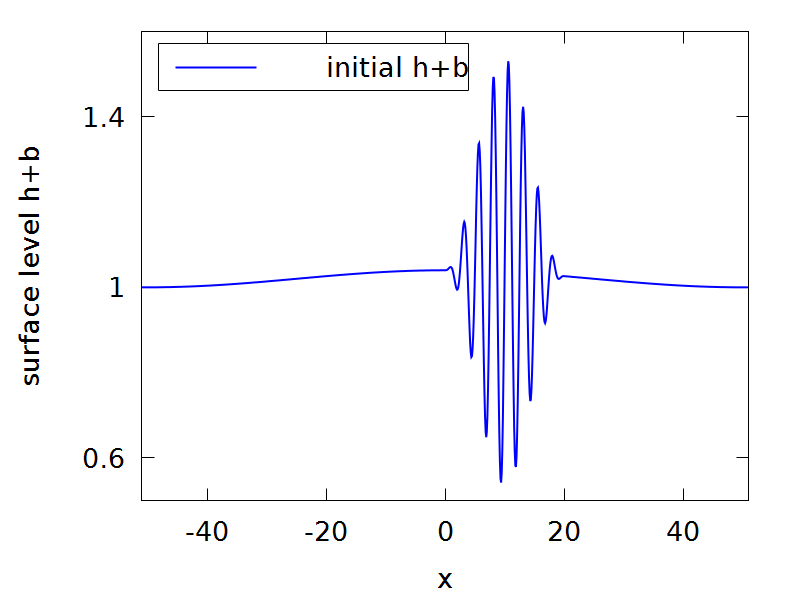}}\quad
%					
%                     \subfigure[$t=0.2$]
%					{\includegraphics[width=8cm]{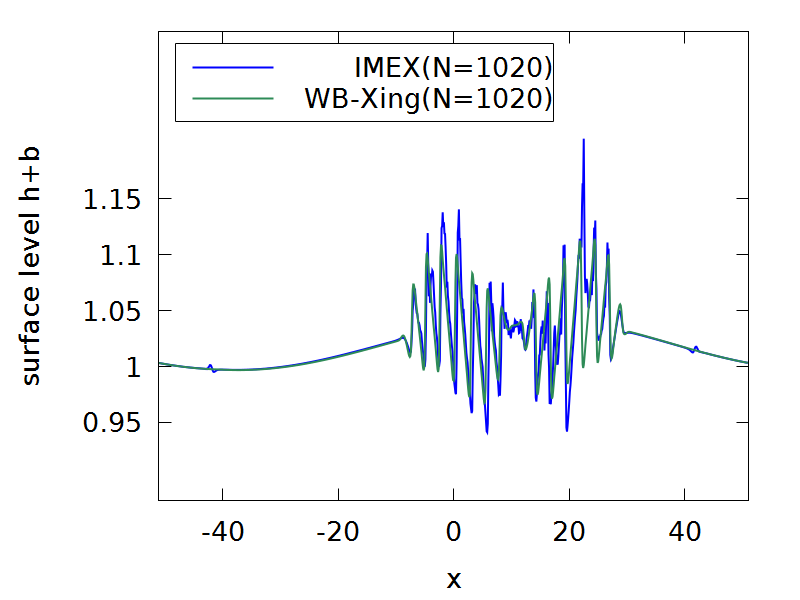}}
%				}
%              \mbox{
%                     \subfigure[$t=0.5$]
%					{\includegraphics[width=8cm]{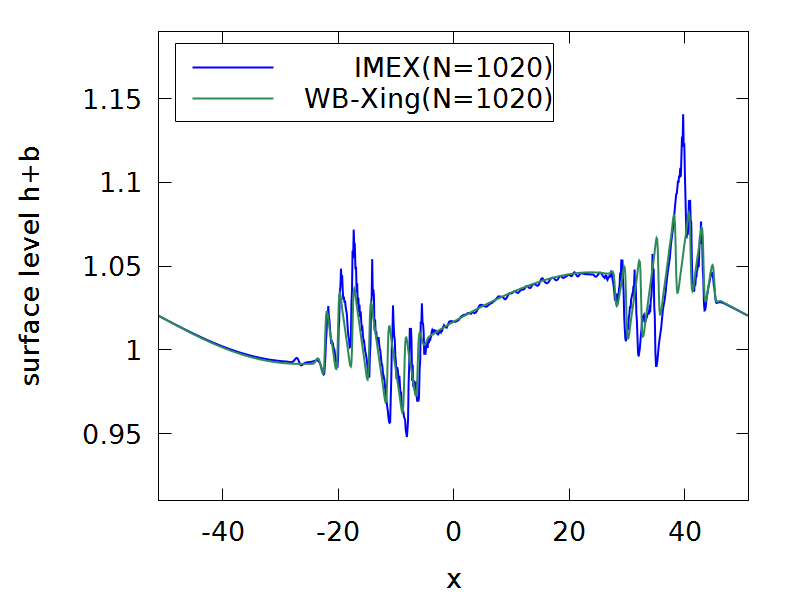}}\quad
%					
%                     \subfigure[$t=1.0$]
%					{\includegraphics[width=8cm]{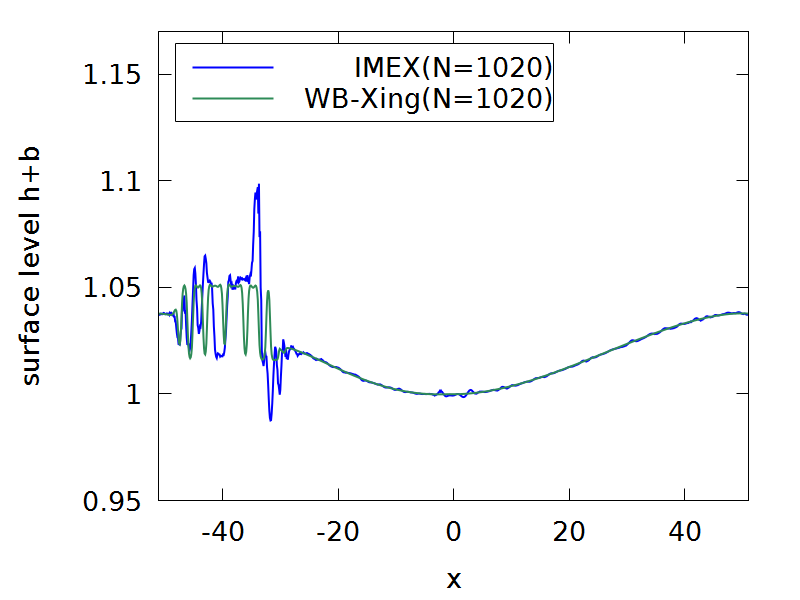}}
%				}
%              \mbox{
%                     \subfigure[$t=2.4$]
%					{\includegraphics[width=8cm]{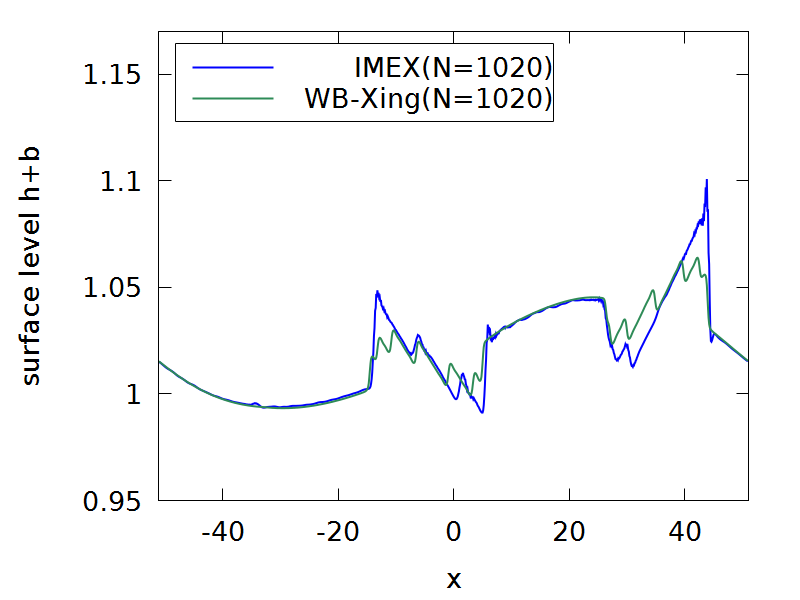}}\quad
%					
%                     \subfigure[$t=4.1$]
%					{\includegraphics[width=8cm]{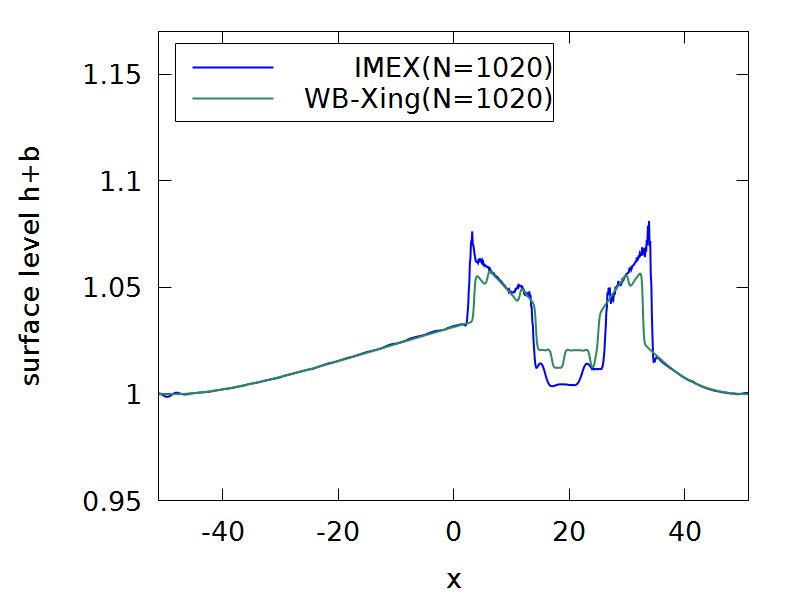}}
%				}
%				\caption{ The numerical solution of water surface level $h+b$ for Example \ref{exam9} with a uniform mesh of $N=1020$.}
%            \label{Fg_eg_91}
%			\end{center}
%		\end{figure}

\begin{figure}[hbtp]
			\begin{center}
				\mbox{
                     \subfigure[$t=0$]
					{\includegraphics[width=8cm]{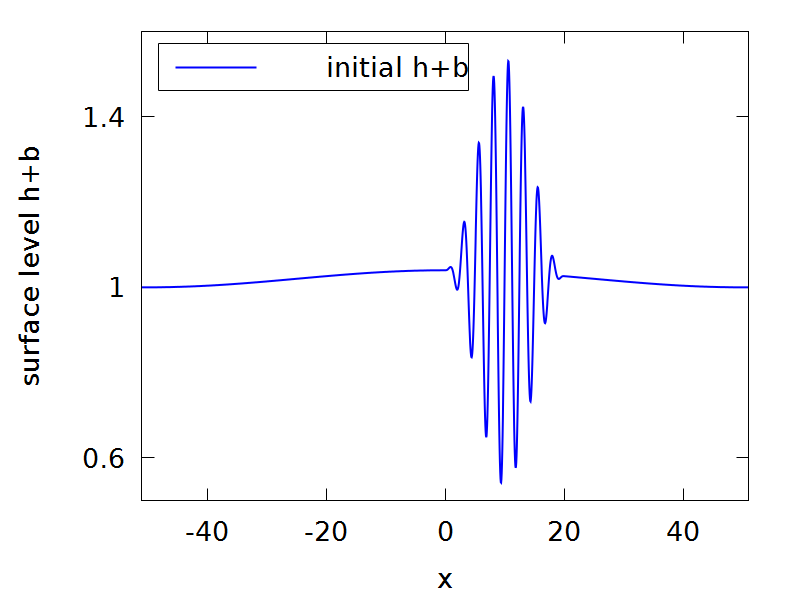}}\quad
					
                     \subfigure[$t=0.2$]
					{\includegraphics[width=8cm]{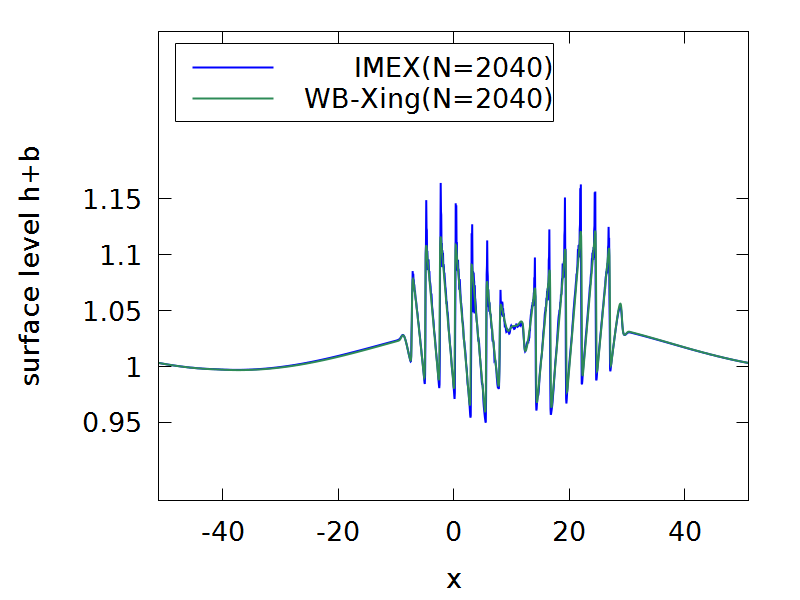}}
				}
              \mbox{
                     \subfigure[$t=0.5$]
					{\includegraphics[width=8cm]{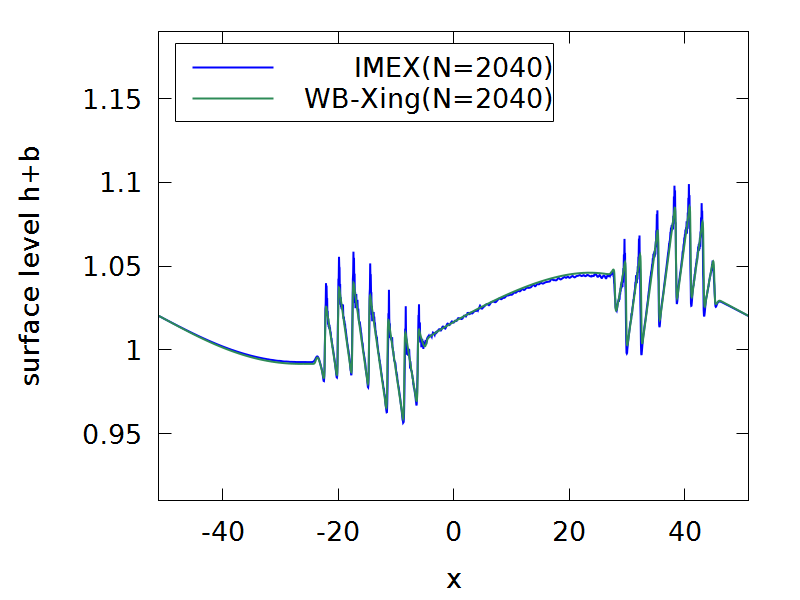}}\quad
					
                     \subfigure[$t=1.0$]
					{\includegraphics[width=8cm]{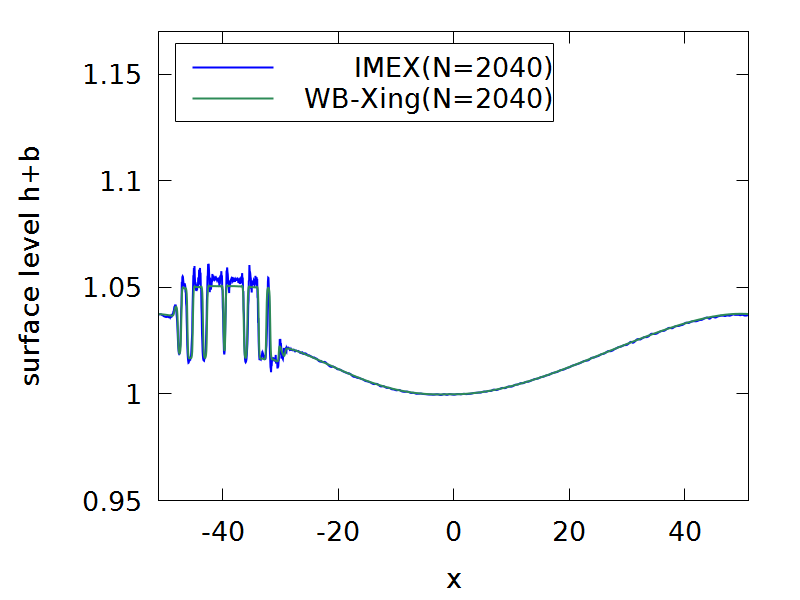}}
				}
              \mbox{
                     \subfigure[$t=2.4$]
					{\includegraphics[width=8cm]{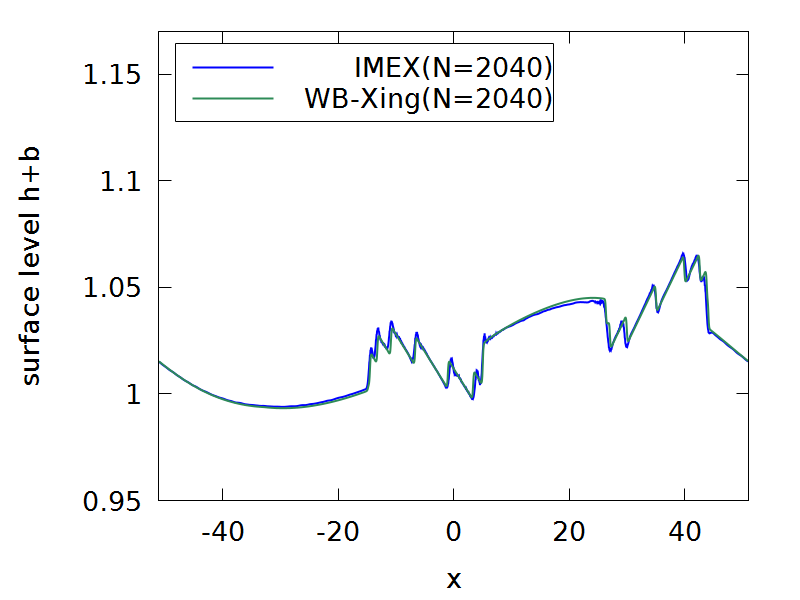}}\quad
					
                     \subfigure[$t=4.1$]
					{\includegraphics[width=8cm]{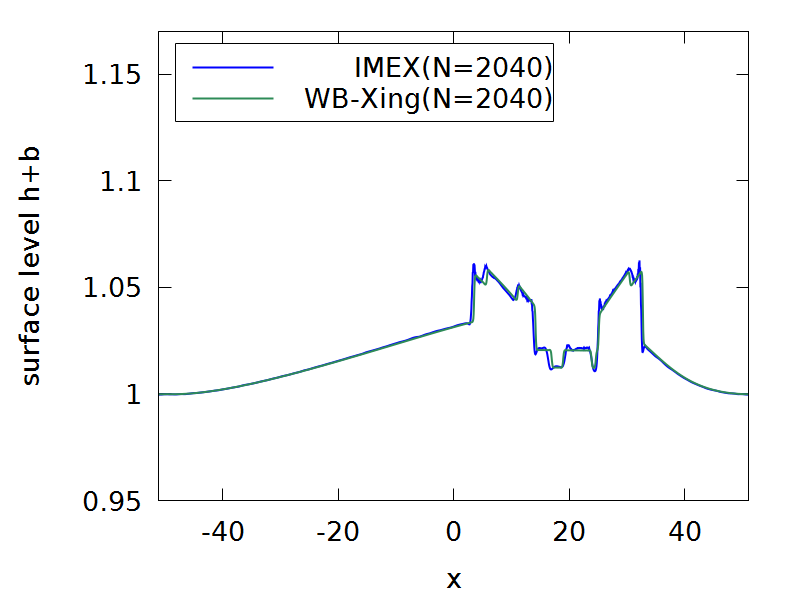}}
				}
				\caption{ The numerical solution of water surface level $h+b$ for Example \ref{exam9} with a uniform mesh of $N=2040$.}
            \label{Fg_eg_92}
			\end{center}
		\end{figure}

}\end{exa}

\subsection{Two-dimensional case}
\label{sec42}

\begin{exa}{\em
		\label{exam6}
		({\bf{Accuracy test}})
For this 2D example, we consider a smooth non-flat bottom function to be
\begin{equation}
		\label{E23}
		b(x,y) = \sin(2\pi x) + \cos(2\pi y) + 2,
\end{equation}
and the initial conditions are
\begin{equation}
		\label{E24}\left\{
		\begin{aligned}
		h(x,y,0)    & = 10 - b(x,y) + \eps^2\sin(2\pi x)\cos(2\pi y),  \\
		(hu)(x,y,0) & = \sin(2\pi x)\cos(2\pi y), \\
		(hv)(x,y,0) & = -\cos(2\pi x)\sin(2\pi y),
		\end{aligned}
		\right.
\end{equation}
on a computational domain $[0,1]^2$ with periodic boundary conditions in both directions. Note that the initial conditions \eqref{E24} are set to be well-prepared \eqref{wellp}.
		
We take three different $\eps$'s of $\eps=1, 10^{-2}$ and $10^{-4}$.  We compute the solution up to a final time $T=0.05$ on mesh grid points of $N^2$.
Since the exact solution is not available, the numerical errors are computed by comparing the solutions on two consecutive meshes.
The  $L^1$ errors and orders of accuracy are shown in Table~{\ref{T_eg6}}.
We can see high order accuracy can be obtained for all Froude numbers. 

\renewcommand{\multirowsetup}{\centering}
\begin{table}[htbp]
  \caption{ Example \ref{exam6}. Numerical errors and orders of accuracy for the momentum $hu$ with Froude number $\eps=1, 10^{-2}, 10^{-4} $.}
  \begin{center}
  \begin{tabular}{c||c|c|c|c|c|c}\hline\hline
  $\eps$ &  N         &    16    &    32    &    64    &    128   &   256   \\ \hline\hline
  \multirow{2}{1cm}{1}
  & $L^1$ error  & 6.56E-02 & 3.74E-03 & 1.36E-04 & 4.39E-06 & 1.39E-07 \\ \cline{2-7}
  & order &    --           &   4.13   &   4.78   &    4.96  &   4.98  \\ \hline\hline
  \multirow{2}{1cm}{$10^{-2}$}
  & $L^1$ error & 2.47E-02 & 1.46E-03 & 1.69E-04 & 6.11E-06 & 1.91E-07 \\ \cline{2-7}
  & order      &    --     &   4.08   &   3.11   &    4.79  &   5.00  \\ \hline\hline
  \multirow{2}{1cm}{$10^{-4}$}
  & $L^1$ error & 2.63E-02 & 1.33E-03 & 4.88E-05 & 1.63E-06 & 1.09E-07 \\ \cline{2-7}
  & order       &    --    &   4.31   &   4.76   &    4.90  &   3.91  \\ \hline\hline
  \end{tabular}
  \end{center}
  \label{T_eg6}
\end{table}
}\end{exa}

\begin{exa}
{\em
\label{exam8}
({\bf {A small perturbation of 2D steady-state water}})
For this example, we try to test our scheme for the capability of capturing the perturbation on a stationary water in the two dimensional case, which has been studied in \cite{leveque1998balancing,xing2005high}.

The bottom topography is an isolated elliptical shaped hump
    \begin{equation}
    \label{Ex8_1}
    b(x,y) = 0.8e^{-5(x-0.9)^2-50(y-0.5)^2}
    \end{equation}
and the initial conditions are
\begin{subequations}
    \begin{equation}
    \label{Ex8_2}
    h(x,y,0) = \left\{
    \begin{aligned}
    & 1 - b(x,y)+0.01,  &\text{if}\quad 0.05  \le x \le 0.15; \\
    & 1 - b(x,y),       &\text{otherwise}.
    \end{aligned}
    \right.
    \end{equation}
    \begin{equation}
    \label{Ex8_3}
    hu = hv = 0,
    \end{equation}
\end{subequations}
on the computational domain $[0,2]\times[0,1]$, with outflow boundary in the $x$ direction and periodic boundary in the $y$ direction. The Froude number is set as $\eps=\frac{1}{\sqrt{g}}$. We show the numerical results of surface level $H=h+b$ on two different meshes $200\times100$ and $400\times200$ in Fig.~{\ref{Fg_eg8_1}}. The initial perturbation is separated into two wave propagating to the left and right. With the left-propagating wave moving out of the domain, the right-propagating wave interacts with the non-flat bottom topography, and is well captured by the proposed method. We can observe that the numerical results are comparable to those of ``WB-Xing'' method in \cite{xing2005high}.

\begin{figure}[hbtp]
			\begin{center}
\mbox{\subfigure[surface level at $t=0.12$]					{\includegraphics[width=7cm]{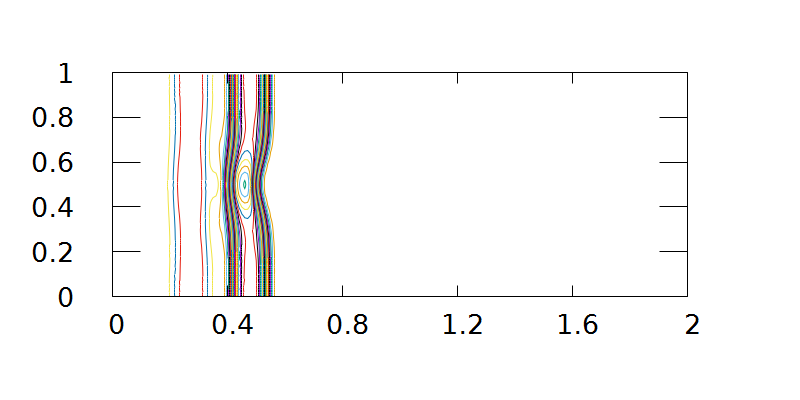}}\quad		
                     \subfigure[surface level at $t=0.12$]		
					{\includegraphics[width=7cm]{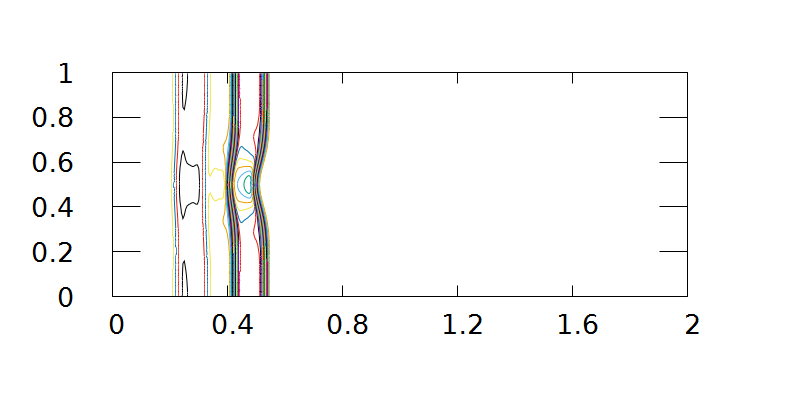}}
				}
\mbox{\subfigure[surface level at $t=0.24$]					{\includegraphics[width=7cm]{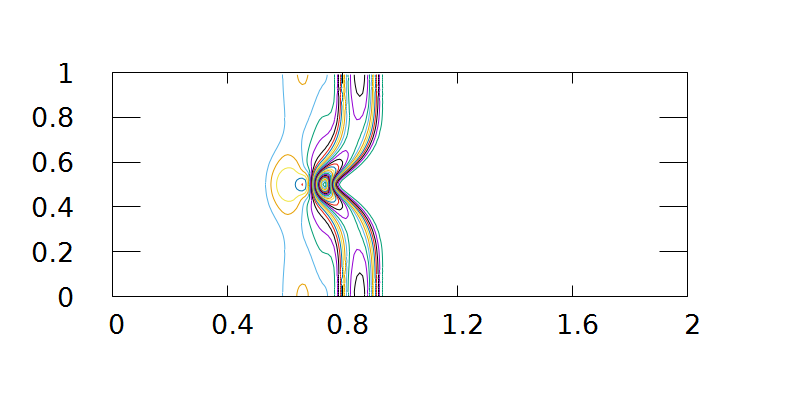}}\quad		
                     \subfigure[surface level at $t=0.24$]		
					{\includegraphics[width=7cm]{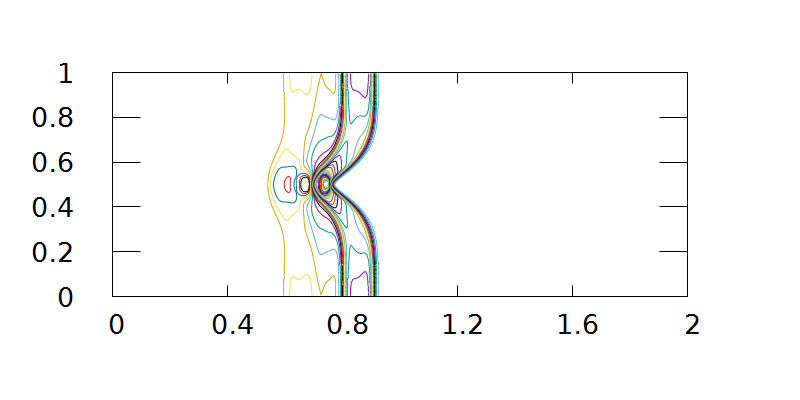}}
				}
\mbox{\subfigure[surface level at $t=0.36$]					{\includegraphics[width=7cm]{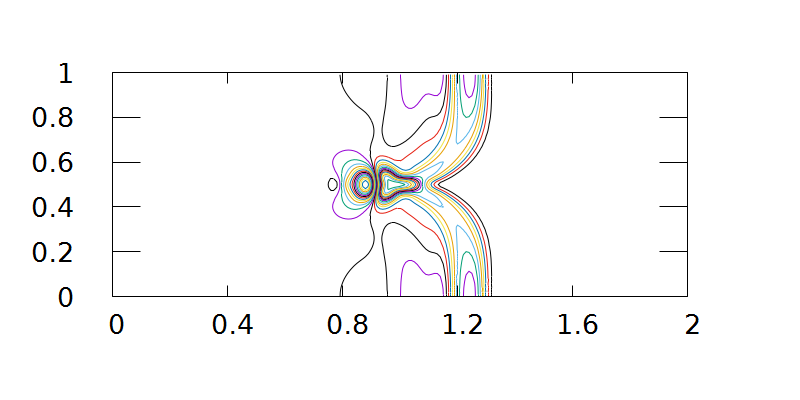}}\quad		
                     \subfigure[surface level at $t=0.36$]		
					{\includegraphics[width=7cm]{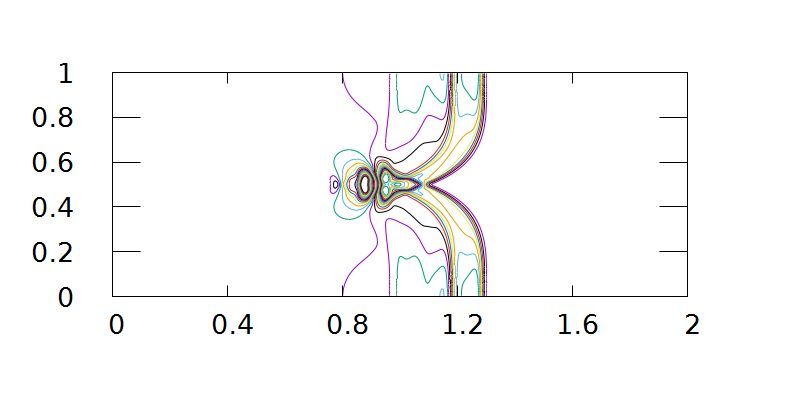}}
				}
\mbox{\subfigure[surface level at $t=0.48$]					{\includegraphics[width=7cm]{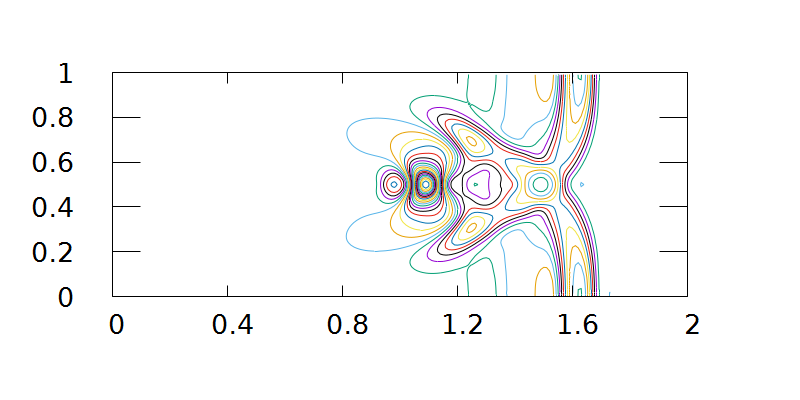}}\quad		
                     \subfigure[surface level at $t=0.48$]		
					{\includegraphics[width=7cm]{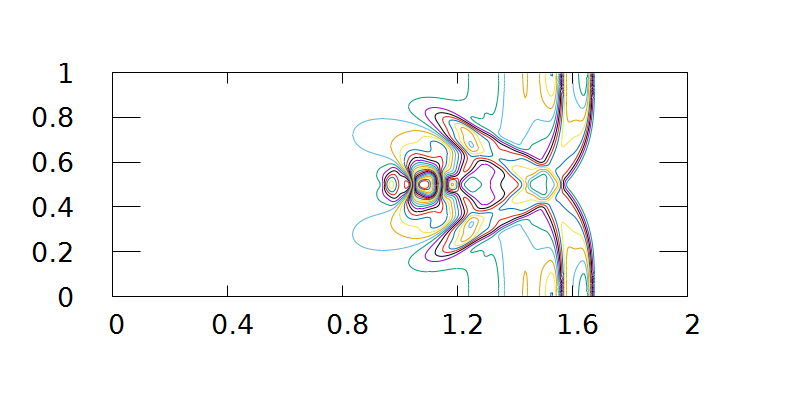}}
				}
\mbox{\subfigure[surface level at $t=0.6$]					{\includegraphics[width=7cm]{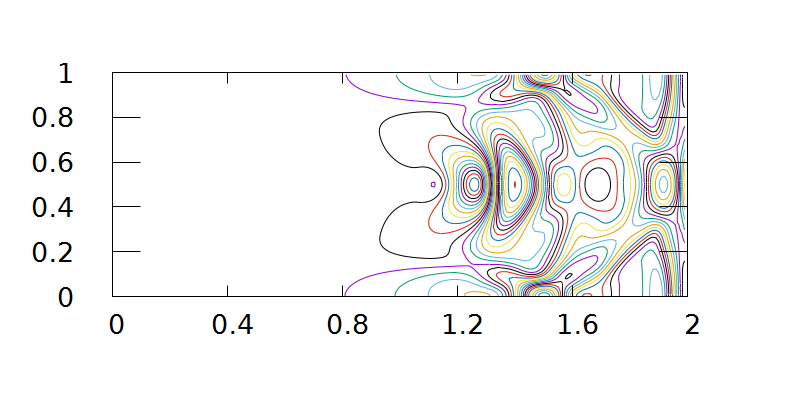}}\quad		
                     \subfigure[surface level at $t=0.6$]		
					{\includegraphics[width=7cm]{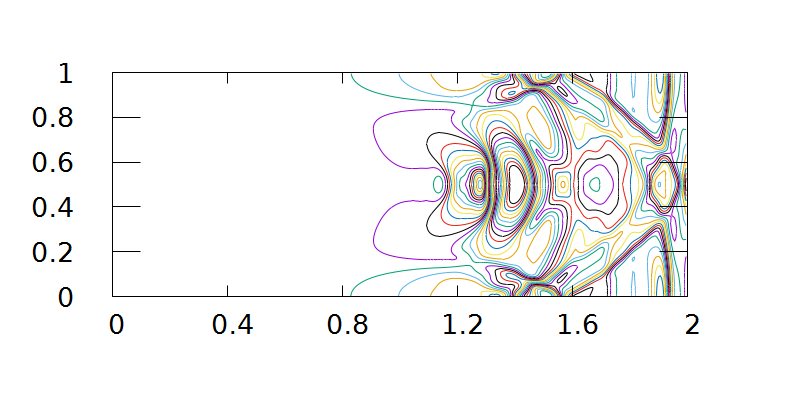}}
				}
				\caption{ Numerical solutions of the surface level $H=h+b$ for Example~\ref{exam8}. From top to bottom: at $t=0.12$ from 0.9993 to 1.0059; at  $t=0.24$ from 0.9980 to 1.0115; at  $t=0.36$ from 0.918 to 1.00872; $t=0.48$ from 0.9912 to 1.0044; $t=0.6$ from 0.9961 to 1.00432. 30 contour lines are used. Left: $200\times100$ uniform mesh. Right: $400\times200$ uniform mesh. }
				\label{Fg_eg8_1}
			\end{center}
		\end{figure}

}
\end{exa}

\begin{exa}{\em
		\label{exam7}
		({\bf{Traveling vortex}}) Now we consider a traveling vortex in the two dimensional case \cite{bispen2014imex}. The computational domain is $[0,2]\times[0,1]$, and the initial conditions are given by
		\begin{subequations}
			\label{ex7}
		\begin{equation}
        \label{Ex25}
		H(x,y,0) = 110+
		\left\{
		\begin{aligned}
		&\left(\frac{\eps \Gamma}{\omega}\right)^2(k(\omega\Gamma_c) - k(\pi)), \quad & \text{if }  \omega\Gamma_c \le\pi;\\
		&0, \quad & \text{otherwise},
		\end{aligned}
		\right.
		\end{equation}		
		\begin{equation}
        \label{Ex26}
		u(x,y,0) = 2+
		\left\{
		\begin{aligned}
		&\Gamma(1+\cos(\omega\Gamma_c))(0.5-y), \quad & \text{if }  \omega\Gamma_c \le\pi;\\
		&0, \quad & \text{otherwise},
		\end{aligned}
		\right.
		\end{equation}		
		\begin{equation}
        \label{Ex28}
		v(x,y,0) =
		\left\{
		\begin{aligned}
		&\Gamma(1+\cos(\omega\Gamma_c))(x-0.5), \quad & \text{if }  \omega\Gamma_c \le\pi;\\
		&0, \quad & \text{otherwise},
		\end{aligned}
		\right.
		\end{equation}
		\end{subequations}
	    where
        \begin{equation}
        \Gamma_c = \sqrt{(x-0.5)^2+(y-0.5)^2}, \quad \quad \Gamma = 8, \quad \omega = 4\pi,
        \end{equation}
         and
        \begin{equation}
        k(\xi) = 2\cos(\xi) + 2\xi\sin(\xi) + \frac{1}{8}\cos(2\xi) + \frac{\xi}{4}\sin(2\xi)+\frac{3}{4}\xi ^2.
        \end{equation}
		The center of the vortex is initially located at $(0.5, 0.5)$, and then propagates with a
		speed $u_{ref}=2$ along the horizontal direction. Periodic boundary conditions are used.
        For a flat bottom, the vortex could be referred as traveling only along the x-direction, where the exact solutions are given as follows \cite{ricchiuto2009stabilized}
		\begin{equation}
        \label{Ex7_1}
         H(x,y,t) = H(u-2t,y,0), \quad u(x,y,t) = u(x-2t,y,0), \quad v(x,y,t)= v(x-2t,y,0).
        \end{equation}	
        Note that the velocity can be decomposed as ${\bf u}={\bf u}_{ref}+{\bf u}'$, where $\bu_{ref}$ is the background traveling velocity and $\bu'$ is the rotating part which satisfies
       \[
       \nabla\cdot{\bf u}'=0, \quad
       ({\bf u}'\cdot\nabla){\bf u}'+\nabla H=0.
       \]
       Namely, the rotating part $\bu'$ is divergence free and balanced with $\nabla H$, so it performs as local self-rotating.
       {We show the numerical solutions on a mesh gird of $200\times100$ at the final time $T=1$ in Fig. ~{\ref{Fig7_1}} and Fig. ~{\ref{Fig7_2}},  and comparing our results to those produced by the explicit scheme of ``WB-Xing'' on the same mesh, for three different Froude numbers $\eps=1, 0.05, 0.01$. The perturbation of the water surface level $H$ from a constant level $110$ is at the scale of $\eps^2$. We can see that for large Froude number $\eps=1$, both schemes capture the traveling wave well. However, as the Froude number becomes small, e.g. $\eps=0.05$, our AP scheme can still keep the good shape of the vortex, while the results from the explicit ``WB-Xing'' scheme have been greatly damped, due to large numerical viscosities which are inversely proportional to the Froude number $\eps$.
       For the case of $\eps=0.01$, our AP scheme still has good performance, while the wave has been totally damped out for the ``WB-Xing'' scheme, and numerical noises from the damped wave spreading up to the boundary now pollute the whole computational domain, which is also the case for smaller $\eps$'s.
        }

\begin{figure}[hbtp]
			\begin{center}
				\mbox{\subfigure[$\eps = 1$ ]
	{\includegraphics[width=8cm]{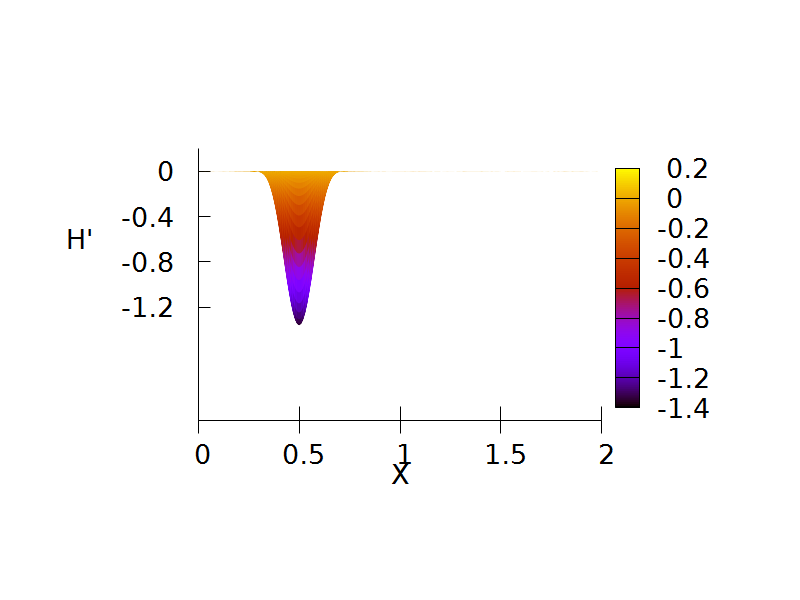}}\quad
	\subfigure[$\eps = 1$]
	{\includegraphics[width=8cm]{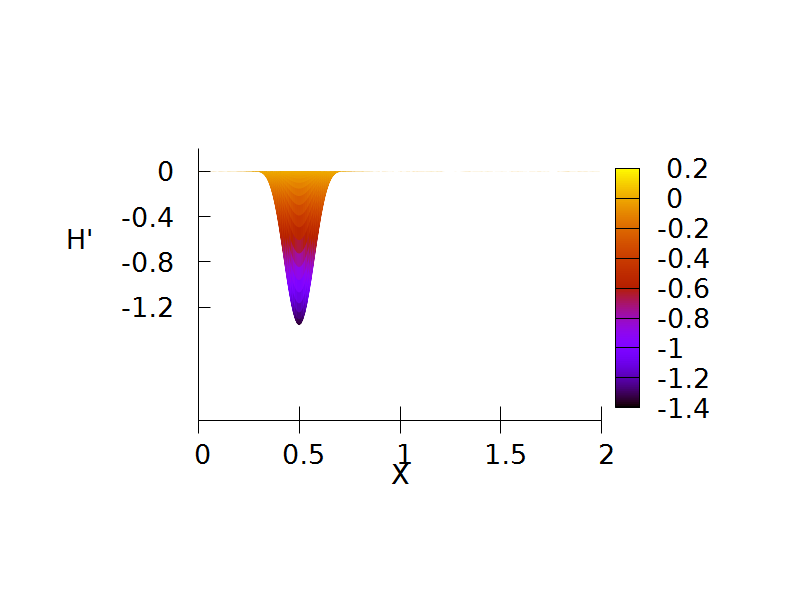}}}
\mbox{\subfigure[$\eps = 0.05$]				{\includegraphics[width=8cm]{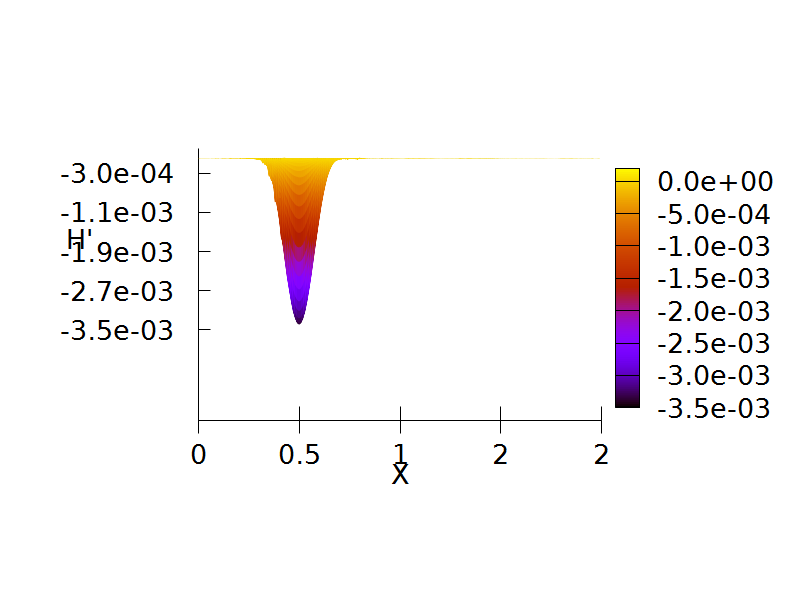}}
	\subfigure[$\eps = 0.05$]				{\includegraphics[width=8cm]{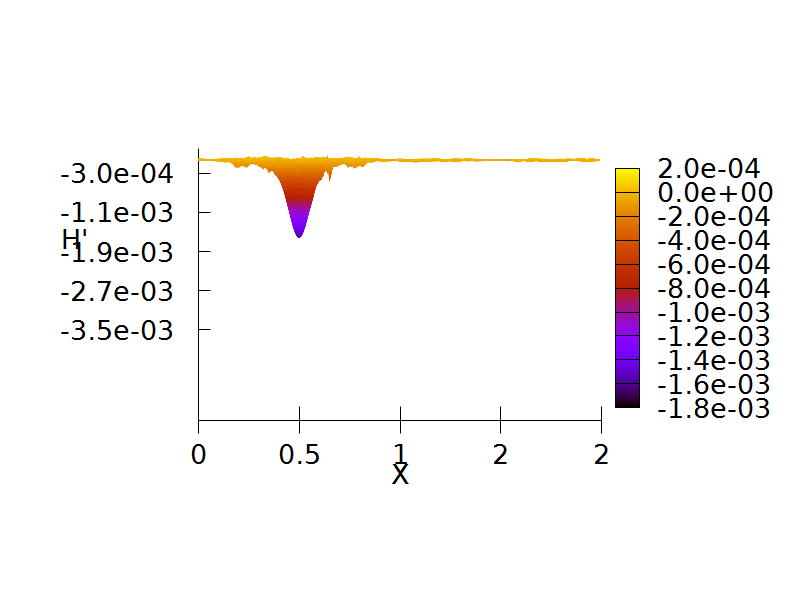}}}
\mbox{\subfigure[$\eps = 0.01$]				{\includegraphics[width=8cm]{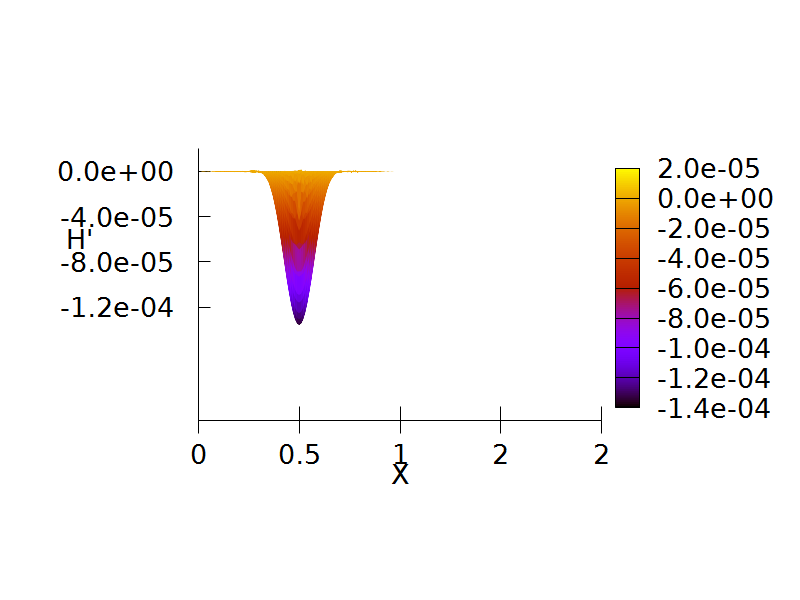}}
	\subfigure[$\eps = 0.01$]				{\includegraphics[width=8cm]{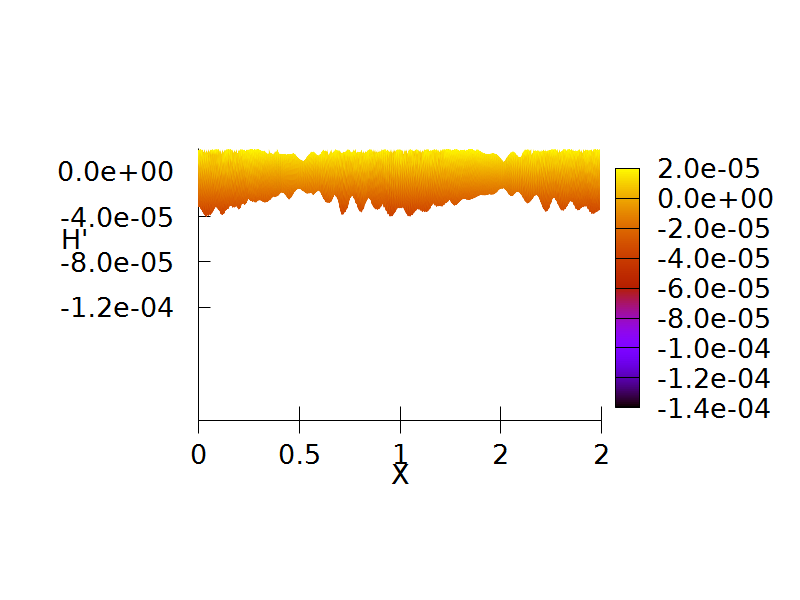}}}
\caption{ Example~\ref{exam7}. Numerical solutions about surface level for the traveling vortex at time $T=1$, on a mesh $200\times 100$. $H'=H-110$ is the deviation from the water surface level of $110$. From top to bottom $\eps=1, 0.05, 0.01$ respectively. Left: IMEX; Right: WB-Xing. }
\label{Fig7_1}
			\end{center}
		\end{figure}

\begin{figure}[hbtp]
	\begin{center}
		\mbox{\subfigure[$\eps = 1$ ]
			{\includegraphics[width=8cm]{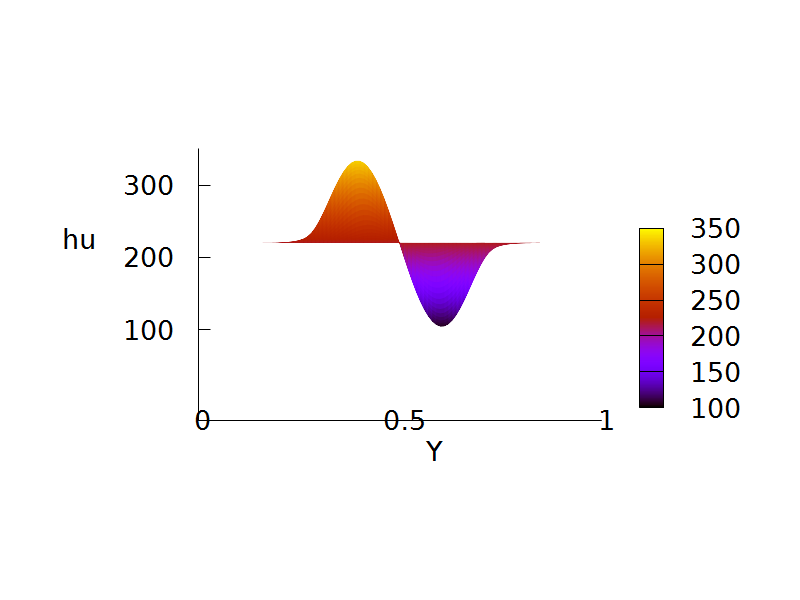}}\quad
			\subfigure[$\eps = 1$]
			{\includegraphics[width=8cm]{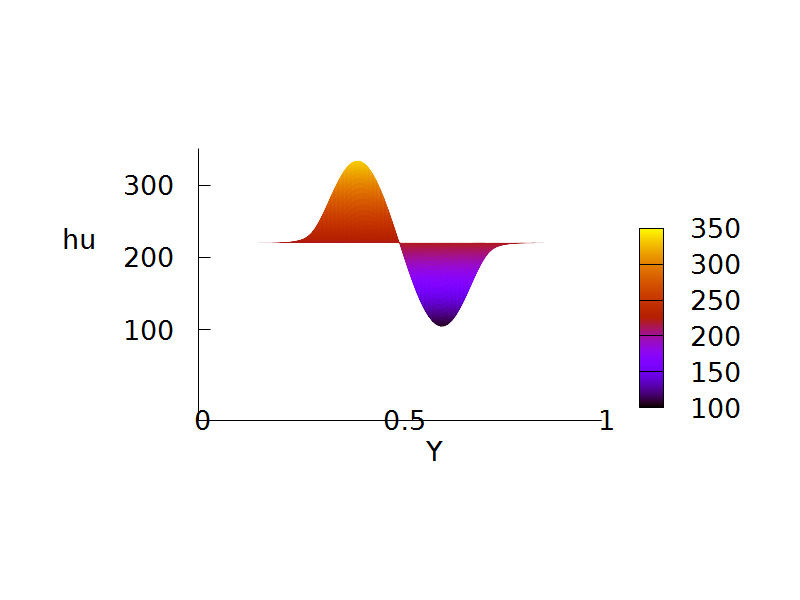}}}
		\mbox{\subfigure[$\eps = 0.05$]				{\includegraphics[width=8cm]{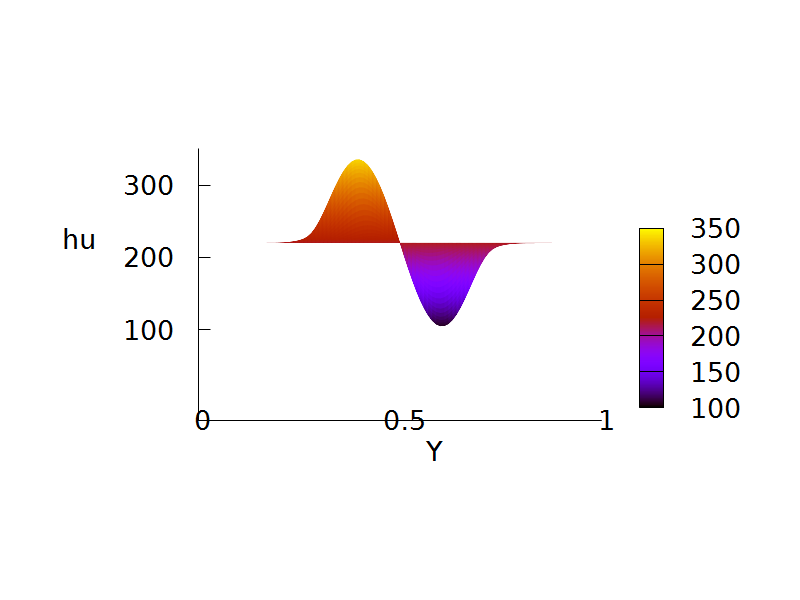}}
			\subfigure[$\eps = 0.05$]				{\includegraphics[width=8cm]{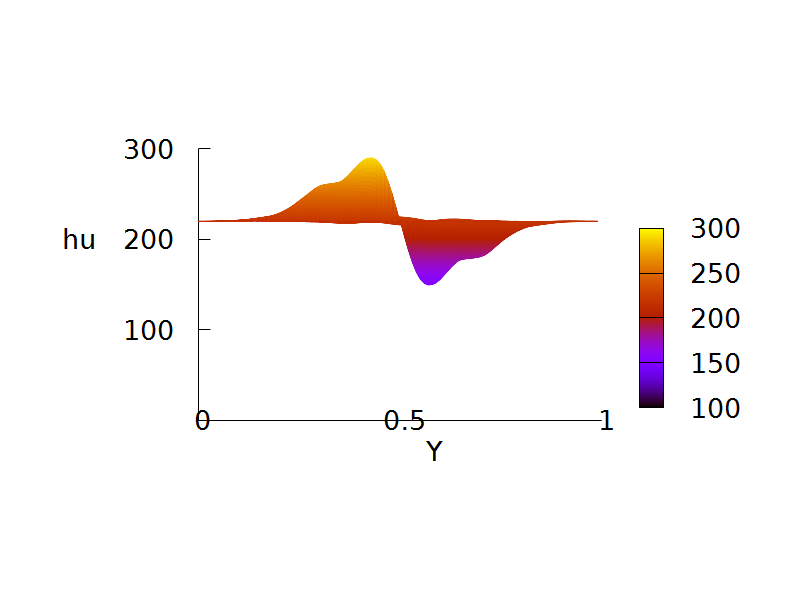}}}
		\mbox{\subfigure[$\eps = 0.01$]				{\includegraphics[width=8cm]{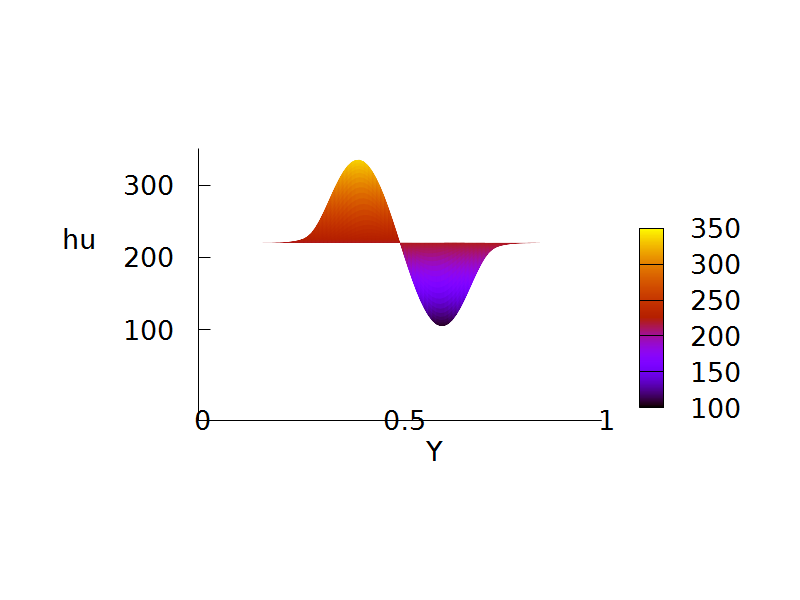}}
			\subfigure[$\eps = 0.01$]				{\includegraphics[width=8cm]{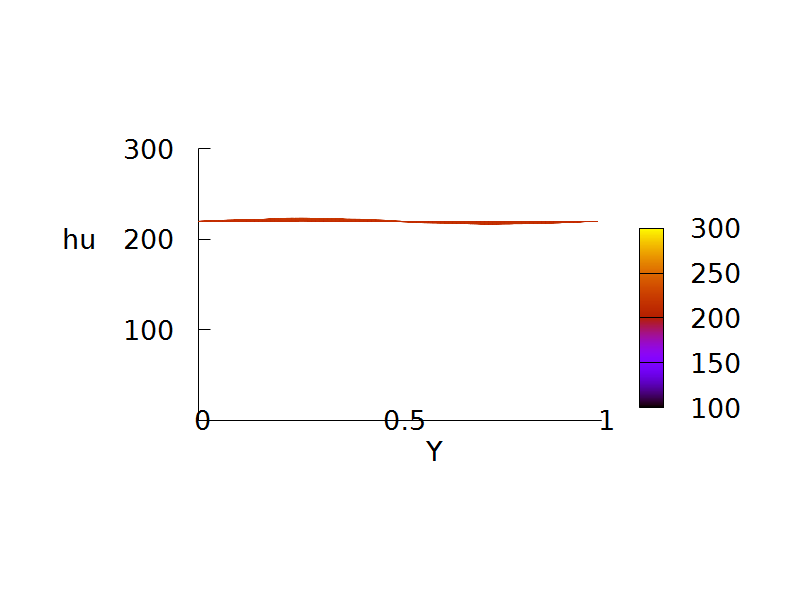}}}		%
		\caption{ Example~\ref{exam7}. Numerical solutions about momentum $hu$ for the traveling vortex at time $T=1$, on a mesh $200\times 100$. From top to bottom $\eps=1, 0.05, 0.01$ respectively. Left: IMEX; Right: WB-Xing.}
		\label{Fig7_2}
	\end{center}
\end{figure}
       Next we add a non-flat bottom which is variant in the $x$ direction,
       $$
       b(x,y) = e^{-5(x-1)^2},
       $$
and keep others the same as in \eqref{ex7}. In this case, the water surface level would be perturbed a little due to the non-flat bottom, but the vortex still travels almost the same. A similar example has been studied in \cite{bispen2014imex,liu2020well}. In Fig.~{\ref{Fig7_3}}, we show the numerical solutions at several different times $T=0, 0.3, 0.6, 1.0$ with $\eps$ chosen as $0.05$. The traveling vortex can also be well captured in this case. Similarly the solutions of the ``WB-Xing'' scheme have been damped.

       \begin{figure}[hbtp]
			\begin{center}
                 \mbox{\subfigure[$t = 0$ ]
				{\includegraphics[width=6.5cm]{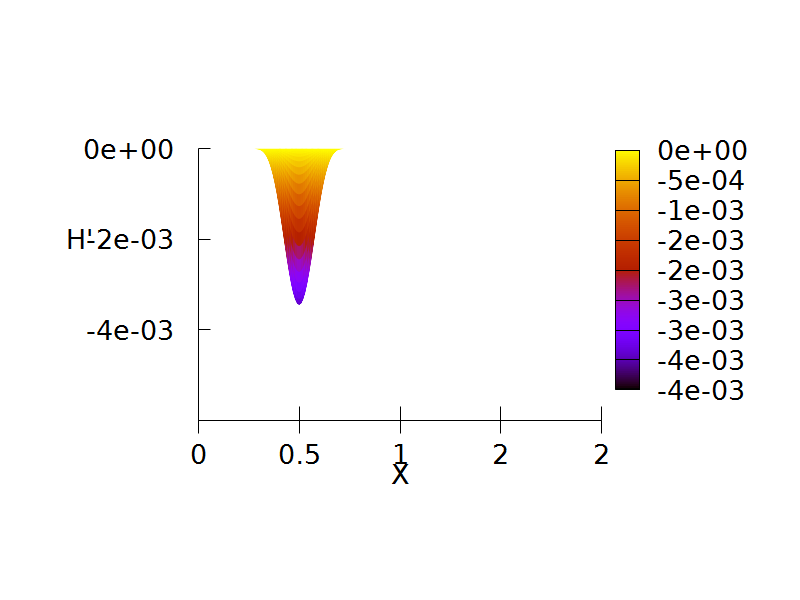}}\quad
                      \subfigure[$t = 0$]
				{\includegraphics[width=6.5cm]{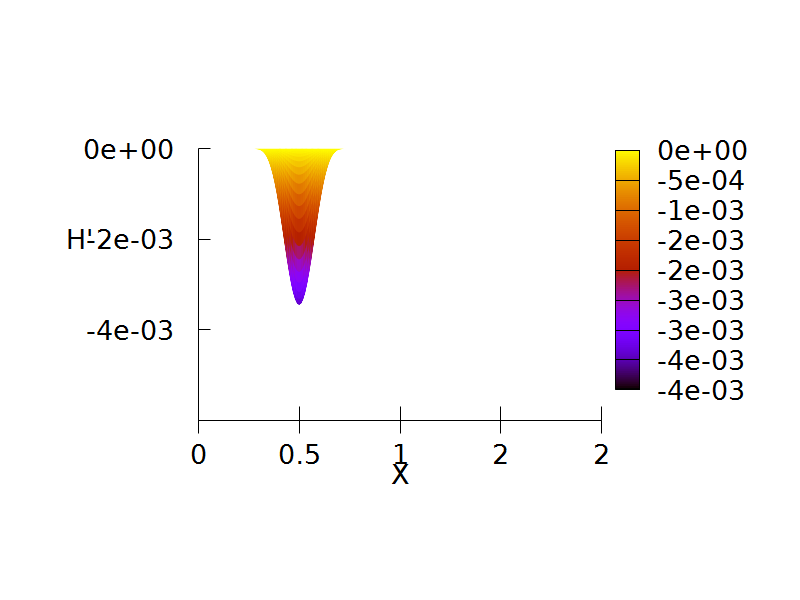}}}
		        \mbox{\subfigure[$t = 0.3$ ]
			{\includegraphics[width=6.5cm]{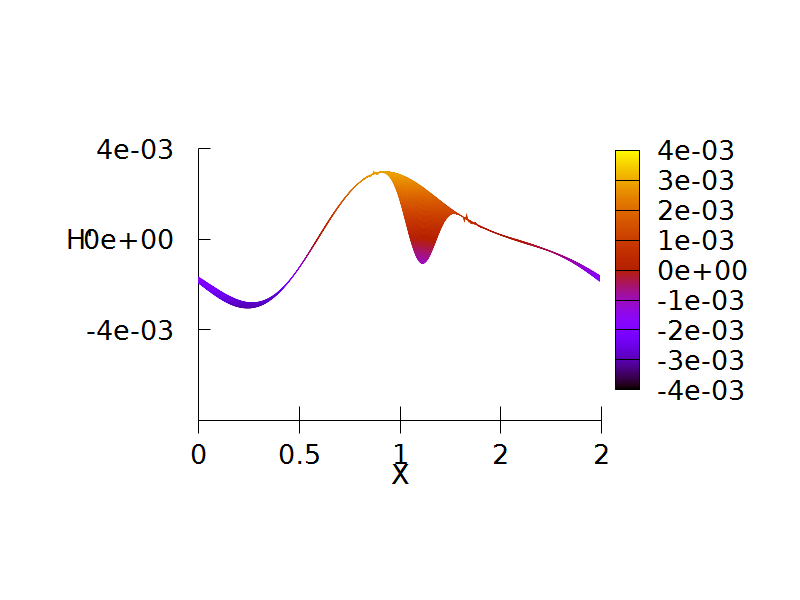}}\quad
                      \subfigure[$t = 0.3$]
				{\includegraphics[width=6.5cm]{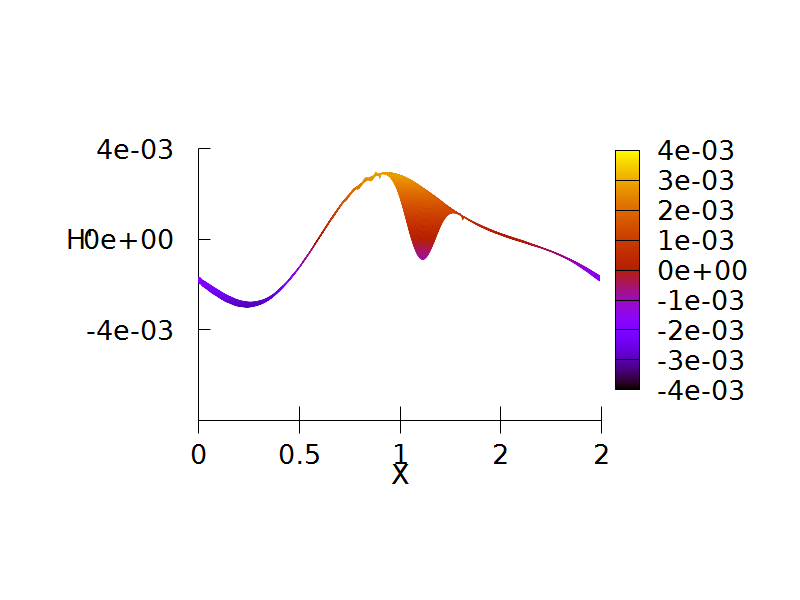}}}
                \mbox{\subfigure[$t = 0.6$ ]
			{\includegraphics[width=6.5cm]{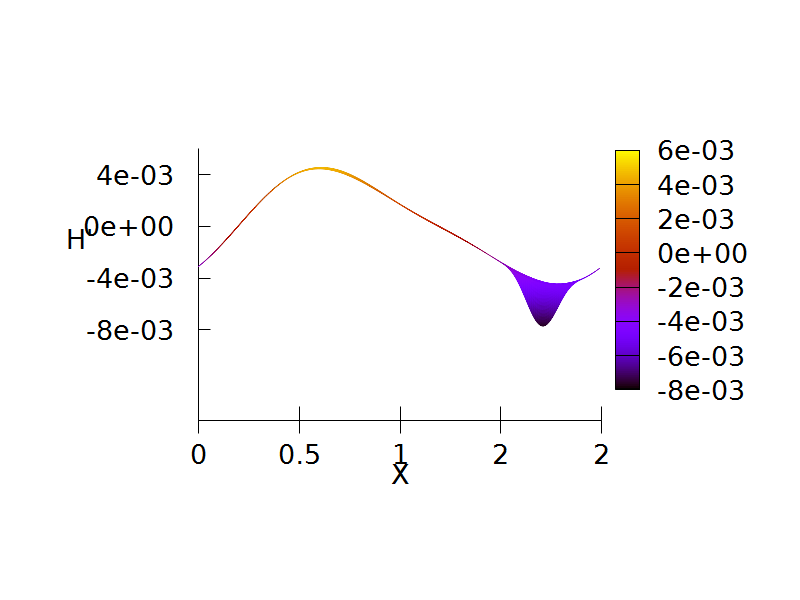}}\quad
                      \subfigure[$t = 0.6$]
				{\includegraphics[width=6.5cm]{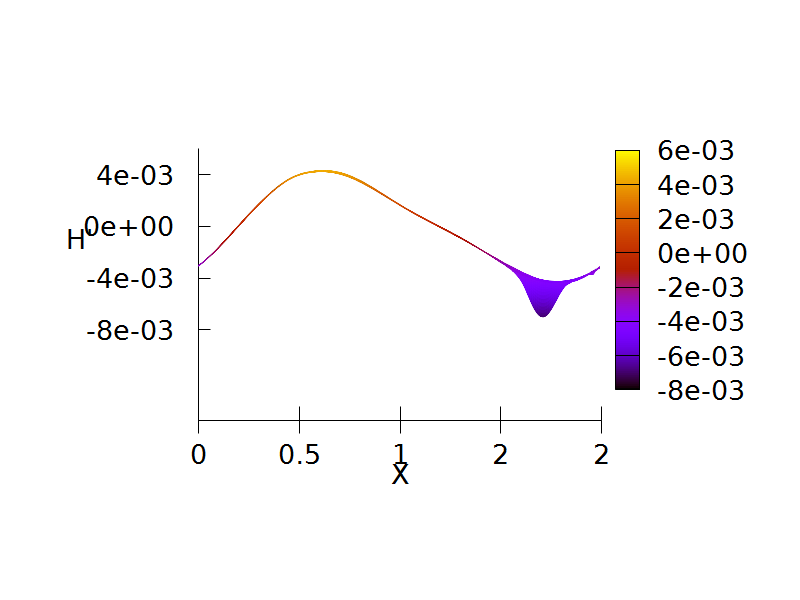}}}
                \mbox{\subfigure[$t = 1.0$ ]
			{\includegraphics[width=6.5cm]{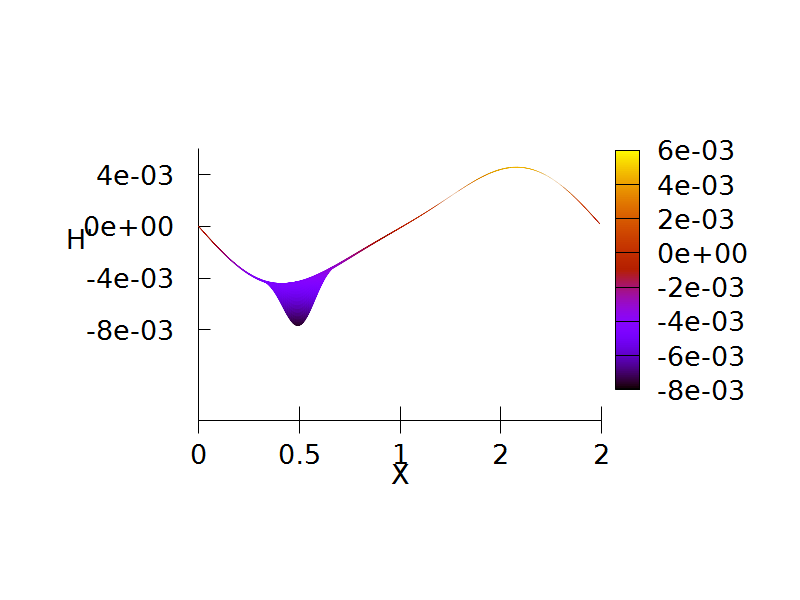}}\quad
                      \subfigure[$t = 1.0$]
				{\includegraphics[width=6.5cm]{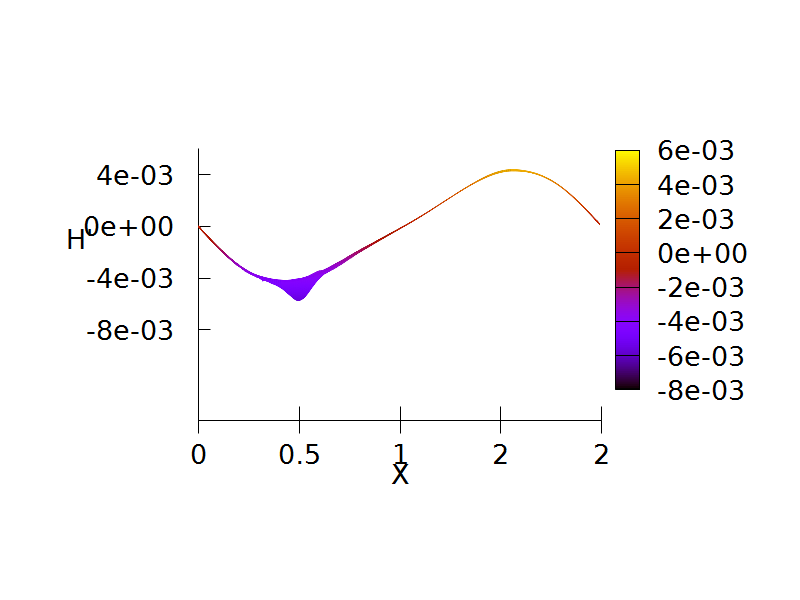}}}
          \caption{ {Example~\ref{exam7}. Numerical solutions of the traveling vortex with a nonflat bottom, on a mesh $200\times 100$. $H'=H-110$ is the deviation from the water surface level. From top to bottom $T=0, 0.3, 0.6, 1.0$ respectively, and $\eps=0.05$. Left: IMEX; Right: WB-Xing.}}
				\label{Fig7_3}
			\end{center}
		\end{figure}
{Finally, in Table~{\ref{T_eg8}} we compare the CPU cost for the two schemes with different $\eps$'s, for the cases considered above. We can find that, the CPU time of the IMEX scheme is less than the explicit ``WB-Xing'' scheme, especially in the low Froude regime, as the explicit method requires a much smaller time step for stability. Generally, the IMEX scheme would be much more efficient than the explicit one in the low Froude regime.}
\begin{table}[htbp]
  \caption{ Example \ref{exam7}. The CPU time (seconds) for two schemes with different Froude numbers, with flat and non-flat bottom topographies. }
  \begin{center}
  \begin{tabular}{c||c|c|c}\hline\hline
   Bottom topology &  $\eps$         &    IMEX    &    WB-Xing    \\ \hline\hline
   \multirow{3}{1cm}{$b=0$}
                   & 1             & 4071.7   & 5658.3
                   \\\cline{2-4}
                   & 0.05          & 5342.3   & 79818.8
                   \\\cline{2-4}
                   & 0.01          & 12444.2  & 378733.6  \\ \hline\hline
  \multirow{1}{1cm}{$b\neq 0$}
                   & 0.05          & 6634.5   & 81090.6  \\ \hline\hline
  \end{tabular}
  \end{center}
  \label{T_eg8}
\end{table}
}
\end{exa}

\section{Conclusion}
\label{sec5}
\setcounter{equation}{0}
\setcounter{figure}{0}
\setcounter{table}{0}

{In this paper, a high order semi-implicit asymptotic preserving scheme for the shallow water equations with a non-flat bottom topography is developed. The scheme is shown to be well-balanced, asymptotic preserving and asymptotically accurate. Numerical results in 1D and 2D have demonstrated the well-balanced property, the capability of capturing small perturbations of still water equilibrium, high order accuracy and asymptotic preserving for all ranges of Froude numbers. As compared to the explicit ``WB-Xing'' scheme, the semi-implicit AP scheme performs almost the same for large Froude numbers while capturing small perturbations well, and is in general much more efficient in the low Froude regime.}

%
%
%\section*{Acknowledgement}
%
%T. Xiong acknowledges support by TZ2016002, NSFC grant No. 11971025, NSF grant of Fujian Province No. 2019J06002

%%%%%%%%%%%%%%%%%%%%%%%%%%%%%%%%%%%%%%%%%%
%
%%%%%%%%%%%%%%%%%%%%%%%%%%%%%%%%%%%%%%%%%%

%\newpage

\bibliographystyle{abbrv}
\bibliography{refer}

%%%%%%%%%%%%%%%%%%%%%%%%%%%%%%%%%%
%
%%%%%%%%%%%%%%%%%%%%%%%%%%%%%%%%%%

\end{document}